\renewcommand{\L}{\mathcal L}
\newcommand{\des}{\operatorname{des}}
\newcommand{\be}{\begin{equation}}
\newcommand{\ee}{\end{equation}}
\newcommand{\ds}{\displaystyle}
\newcommand{\im}{\operatorname{im}}
\newcommand{\LL}{\mathcal{L}}
\newcommand{\R}{\mathcal{R}}
\newcommand{\rfactor}{\operatorname{rfactor}}
\newcommand{\Rfactor}{\operatorname{Rfactor}}
\newcommand{\Monoid}{\mathcal{M}}
\newcommand{\prom}{\hat\partial}
\newtheorem{thm}{Theorem}[section]
\newtheorem{cor}[thm]{Corollary}
\newtheorem{lem}[thm]{Lemma}
\newtheorem{prop}[thm]{Proposition}
\newtheorem{rem}[thm]{Remark}
\newtheorem{conj}[thm]{Conjecture}
\newtheorem{eg}[thm]{Example}
\numberwithin{equation}{section}
\def\Ddots{\mathinner{\mkern1mu\raise\p@
\vbox{\kern7\p@\hbox{.}}\mkern2mu
\raise4\p@\hbox{.}\mkern2mu\raise7\p@\hbox{.}\mkern1mu}}
\begin{document} 
\title{Markov chains for promotion operators}

\author[A. Ayyer]{Arvind Ayyer}
\address[Arvind Ayyer]{Department of Mathematics, UC Davis, One Shields Ave., Davis, CA 95616-8633, U.S.A. \newline
New address: Department of Mathematics, Department of Mathematics, Indian Institute of Science, Bangalore - 560012, India.
}

\email{arvind@math.iisc.ernet.in}

\author[S. Klee]{Steven Klee}
\address[Steven Klee]{Seattle University, Department of Mathematics, 901 12th Avenue E, Seattle, WA 98122, U.S.A}
\email{klee@math.ucdavis.edu}

\author[A. Schilling]{Anne Schilling}
\address[Anne Schilling]{Department of Mathematics, UC Davis, One Shields Ave., Davis, CA 95616-8633, U.S.A.}
\email{anne@math.ucdavis.edu}

\thanks{A.A. would like to acknowledge support from MSRI, where part of this work was done.
S.K. was supported by NSF VIGRE grant DMS--0636297. 
A.S. was supported by NSF grant DMS--1001256 and OCI--1147247.}

\subjclass{Primary 06A07, 20M32, 20M30, 60J27; Secondary: 47D03}

\keywords{Posets, Linear extensions, Promotion, Markov chains, Tsetlin
  library, $\R$-trivial monoids}

\dedicatory{Dedicated to Mohan Putcha and Lex Renner on the occasion of their 60th birthdays.
}

\begin{abstract}
We consider generalizations of Sch\"utzenberger's promotion operator
on the set $\mathcal{L}$ of linear extensions of a finite poset. This
gives rise to a strongly connected graph on $\mathcal{L}$.  In earlier
work \cite{AKS:2012}, we studied promotion-based Markov chains on
these linear extensions which generalizes results on the Tsetlin
library. We used the theory of $\R$-trivial monoids in an essential
way to obtain explicitly the eigenvalues of the transition matrix in
general when the poset is a rooted forest.  We first survey these
results and then present explicit bounds on the mixing time and
conjecture eigenvalue formulas for more general posets. We also
present a generalization of promotion to arbitrary subsets of the
symmetric group.
\end{abstract}

\maketitle

%%%%%%%%%%%%%%%%%%%%%%%%%%%%%%%%%%%%%%%%%%%%%%%%%%%
\section{Introduction}
%%%%%%%%%%%%%%%%%%%%%%%%%%%%%%%%%%%%%%%%%%%%%%%%%%%

Sch\"utzenberger~\cite{schuetzenberger.1972} introduced the notion of
evacuation and promotion on the set of linear extensions of a finite
poset $P$ of size $n$. This generalizes promotion on standard Young
tableaux defined in terms of jeu-de-taquin
moves. Haiman~\cite{haiman.1992} as well as Malvenuto and
Reutenauer~\cite{malvenuto_reutenauer.1994} simplified
Sch\"utzenberger's approach by expressing the promotion operator
$\partial$ in terms of more fundamental operators $\tau_i$ ($1\le
i<n$), which either act as the identity or as a simple
transposition. A beautiful survey on this subject was written by
Stanley~\cite{stanprom}.

In earlier work, we considered a slight generalization of the
promotion operator \cite{AKS:2012} defined as $\partial_i = \tau_i
\tau_{i+1} \cdots \tau_{n-1}$ for $1\le i\le n$ with
$\partial_1=\partial$ being the original promotion operator.  In
Section~\ref{section.promotion} we define the extended promotion
operator, give examples and state some of its properties.  We survey
our results on Markov chains based on the operators $\partial_i$,
which act on the set of all linear extensions of $P$ (denoted
$\mathcal{L}(P)$) in Section~\ref{section.markov chains}.

Our results \cite{AKS:2012} can be viewed as a natural generalization
of the results of Hendricks~\cite{hendricks1,hendricks2} on the
Tsetlin library \cite{tsetlin}, which is a model for the way an
arrangement of books in a library shelf evolves over time. It is a
Markov chain on permutations, where the entry in the $i$th position is
moved to the front with probability $p_i$. From our viewpoint,
Hendricks' results correspond to the case when $P$ is an anti-chain
and hence $\mathcal{L}(P)=S_n$ is the full symmetric group.  Many
variants of the Tsetlin library have been studied and there is a
wealth of literature on the subject. We refer the interested reader to
the monographs by Letac~\cite{letac} and by Dies~\cite{dies}, as well
as the comprehensive bibliographies in~\cite{fill.1996}
and~\cite{bidigare_hanlon_rockmore.1999}.

One of the most interesting properties of the Tsetlin library Markov
chain is that the eigenvalues of the transition matrix can be computed
exactly. The exact form of the eigenvalues was independently
investigated by several groups. Notably Donnelly~\cite{donnelly.1991},
Kapoor and Reingold~\cite{kapoor_reingold.1991}, and
Phatarfod~\cite{phatarfod.1991} studied the approach to stationarity
in great detail.  There has been some interest in finding exact
formulas for the eigenvalues for generalizations of the Tsetlin
library. The first major achievement in this direction was to
interpret these results in the context of hyperplane arrangements
\cite{bidigare.1997, bidigare_hanlon_rockmore.1999,
  brown_diaconis.1998}.  This was further generalized to a class of
monoids called left regular bands~\cite{brown.2000} and subsequently
to all bands~\cite{brown.2004} by Brown. This theory has been used
effectively by Bj\"orner~\cite{bjorner.2008, bjorner.2009} to extend
eigenvalue formulas on the Tsetlin library from a single shelf to
hierarchies of libraries.

We present without proof our explicit combinatorial formulas
\cite{AKS:2012} for the eigenvalues and multiplicities for the
transition matrix of the promotion Markov chain when the underlying
poset is a rooted forest in Section~\ref{section.chains} (see
Theorem~\ref{theorem.eigenvalues}). The proof of eigenvalues
and their multiplicities follows from the $\R$-triviality of the
underlying monoid  using results by Steinberg~\cite{steinberg.2006, steinberg.2008}.
Intuition on why the promotion monoid is $\R$-trivial is stated in Section~\ref{section.R trivial}.

The remainder of the paper contains new results and is outlined as
follows.  In Section~\ref{section.mixing times}, we prove a formula
for the mixing time of the promotion Markov chain. This improves the
result stated without proof in the Outlook section of \cite{AKS:2012}.
In Section~\ref{section.other posets}, we present a partial conjecture
for the eigenvalues of the transition matrix of posets which are not
rooted forests. We give supporting data for our conjectures with
formulas for all posets of size 4. Lastly,
Section~\ref{section.subsets} defines a generalization of promotion on
arbitrary subsets of $S_n$ and gives a formula for its stationary
distribution.

%%%%%%%%%%%%%%%%%%%%%%%%%%%%%%%%%%%%%%%%%%%%%%%%%%%
\subsection*{Acknowledgements}
We would like to thank the organizers Mahir Can, Zhenheng Li, Benjamin Steinberg, and Qiang Wang 
of the workshop on ``Algebraic monoids, group embeddings and algebraic combinatorics" held
July 3-6, 2012 at the Fields Institute at Toronto for giving us the opportunity
to present this work. We would like to thank Nicolas M. Thi\'ery for discussions.

The Markov chains presented in this paper are implemented in a {\tt
  Maple} package by the first author (AA) available from his home page
and in {\tt Sage}~\cite{sage,sage-combinat} by the third author (AS).  Many of the pictures
presented here were created with {\tt Sage}.

%%%%%%%%%%%%%%%%%%%%%%%%%%%%%%%%%%%%%%%%%%%%%%%%%%%
\section{Extended promotion on linear extensions}
\label{section.promotion}
%%%%%%%%%%%%%%%%%%%%%%%%%%%%%%%%%%%%%%%%%%%%%%%%%%%

Let  $P$ be an arbitrary poset of size $n$, with partial order denoted by $\preceq$. We assume that the 
elements of $P$ are labeled by integers in $[n]:=\{1,2,\ldots,n\}$. In addition, we assume
that the poset is naturally labeled, that is if $i,j \in P$ with $i \preceq j$ in $P$ then $i \le j$ as integers.
Let $\L:=\L(P)$ be the set of its {\bf linear extensions}, 
\be
\L(P) = \{ \pi \in S_{n} \mid i \prec j \text{ in $P$ } \implies \pi^{-1}_{i} < \pi^{-1}_{j} \text{ as integers} \},
\ee
which is naturally interpreted as a subset of the symmetric group $S_n$. Note that the identity permutation $e$ 
always belongs to $\L$. Let $P_{j}$ be the natural (induced) subposet of $P$ consisting of elements $k$ 
such that $j \preceq k$~\cite{stanenum}.

We now briefly recall the idea of {\bf promotion} of a linear extension of a poset $P$. Start with a linear extension 
$\pi \in \L(P)$ and imagine placing the label $\pi^{-1}_{i}$ in $P$ at the location $i$. By the definition of the linear 
extension, the labels will be well-ordered. The action of promotion of $\pi$ will give another linear extension of
$P$ as follows:
\begin{enumerate} 

\item The process starts with a seed, the label 1. First remove it and replace it by the minimum of all the labels covering it, $i$, say. 
 
\item Now look for the minimum of all labels covering $i$ in the original poset, and replace it, and continue in this way. 

\item This process ends when a label is a ``local maximum.'' Place the label $n+1$ at that point.

\item Decrease all the labels by 1.

\end{enumerate} 

This new linear extension is denoted $\pi \partial$  \cite{stanprom}.

\begin{eg}
\label{example.promotion slide}
Figure~\ref{figure:promotion-example} shows a poset (left) to which we assign the identity linear extension
$\pi = 123456789$. The linear extension $\pi'=\pi\partial = 214537869$ obtained by applying the promotion operator 
is depicted on the right. Note that indeed we place $\pi_i^{'-1}$ in position $i$, namely 3 is in position 5 in $\pi'$, so that 
5 in $\pi \partial$ is where 3 was originally.

\begin{center}
\begin{figure}[h]
\begin{tabular}{p{2in}p{2in}}
\begin{tikzpicture}[>=latex,line join=bevel,]
\node (1) at (66bp,7bp) [draw,draw=none] {$1$};
  \node (3) at (36bp,57bp) [draw,draw=none] {$3$};
  \node (2) at (36bp,7bp) [draw,draw=none] {$2$};
  \node (5) at (66bp,107bp) [draw,draw=none] {$5$};
  \node (4) at (66bp,57bp) [draw,draw=none] {$4$};
  \node (7) at (6bp,107bp) [draw,draw=none] {$7$};
  \node (6) at (36bp,107bp) [draw,draw=none] {$6$};
  \node (9) at (21bp,157bp) [draw,draw=none] {$9$};
  \node (8) at (96bp,107bp) [draw,draw=none] {$8$};
  \draw [black,-] (9) ..controls (28.603bp,131.66bp) and (31.916bp,120.61bp)  .. (6);
  \draw [black,-] (5) ..controls (66bp,81.179bp) and (66bp,70.462bp)  .. (4);
  \draw [black,-] (3) ..controls (50.771bp,32.382bp) and (57.698bp,20.837bp)  .. (1);
  \draw [black,-] (7) ..controls (20.771bp,82.382bp) and (27.698bp,70.837bp)  .. (3);
  \draw [black,-] (3) ..controls (36bp,31.179bp) and (36bp,20.462bp)  .. (2);
  \draw [black,-] (4) ..controls (66bp,31.179bp) and (66bp,20.462bp)  .. (1);
  \draw [black,-] (9) ..controls (13.397bp,131.66bp) and (10.084bp,120.61bp)  .. (7);
  \draw [black,-] (8) ..controls (81.229bp,82.382bp) and (74.302bp,70.837bp)  .. (4);
  \draw [black,-] (6) ..controls (36bp,81.179bp) and (36bp,70.462bp)  .. (3);
\end{tikzpicture}%
&
\begin{tikzpicture}[>=latex,line join=bevel,]
\node (1) at (66bp,7bp) [draw,draw=none] {$2$};
  \node (3) at (36bp,57bp) [draw,draw=none] {$5$};
  \node (2) at (36bp,7bp) [draw,draw=none] {$1$};
  \node (5) at (66bp,107bp) [draw,draw=none] {$4$};
  \node (4) at (66bp,57bp) [draw,draw=none] {$3$};
  \node (7) at (6bp,107bp) [draw,draw=none] {$6$};
  \node (6) at (36bp,107bp) [draw,draw=none] {$8$};
  \node (9) at (21bp,157bp) [draw,draw=none] {$9$};
  \node (8) at (96bp,107bp) [draw,draw=none] {$7$};
  \draw [black,-] (9) ..controls (28.603bp,131.66bp) and (31.916bp,120.61bp)  .. (6);
  \draw [black,-] (5) ..controls (66bp,81.179bp) and (66bp,70.462bp)  .. (4);
  \draw [black,-] (3) ..controls (50.771bp,32.382bp) and (57.698bp,20.837bp)  .. (1);
  \draw [black,-] (7) ..controls (20.771bp,82.382bp) and (27.698bp,70.837bp)  .. (3);
  \draw [black,-] (3) ..controls (36bp,31.179bp) and (36bp,20.462bp)  .. (2);
  \draw [black,-] (4) ..controls (66bp,31.179bp) and (66bp,20.462bp)  .. (1);
  \draw [black,-] (9) ..controls (13.397bp,131.66bp) and (10.084bp,120.61bp)  .. (7);
  \draw [black,-] (8) ..controls (81.229bp,82.382bp) and (74.302bp,70.837bp)  .. (4);
  \draw [black,-] (6) ..controls (36bp,81.179bp) and (36bp,70.462bp)  .. (3);
\end{tikzpicture}
\end{tabular}
\caption{A linear extension $\pi$ (left) and $\pi\partial$ (right).}
\label{figure:promotion-example}
\end{figure}
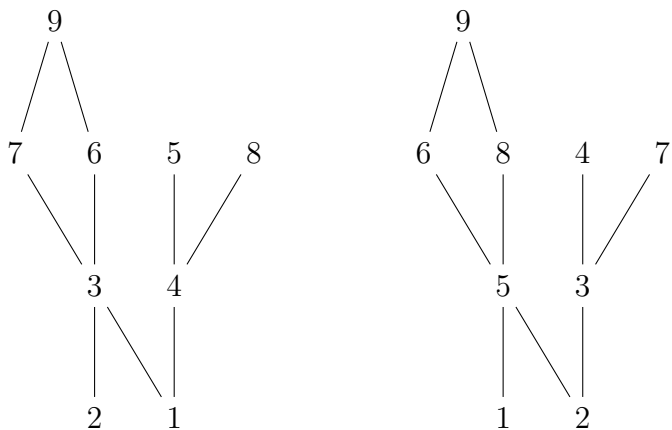
\end{center}

\noindent
Figure \ref{figure:promotion-example2} illustrates the steps used to
construct the linear extension $\pi\partial$ from the linear extension
$\pi$ from Figure \ref{figure:promotion-example}.

\begin{center}
\begin{figure}[h]
\begin{tabular}{|p{1.75in}|p{1.75in}|p{1.75in}|} \hline
\textbf{Step 1:} Remove the minimal element $1$. &
\textbf{Step 2:}    The minimal element that covered $1$ was $3$, so replace $1$ with $3$. &
\textbf{Step 2 (continued):}  The minimal element that covered $3$ was $6$, so replace $3$ with $6$. \\ \hline
%%%%%%%%%%%%%%%%%%%%%%%%%%%%%%
\begin{center}
\begin{tikzpicture}[>=latex,line join=bevel,]
\node (1) at (66bp,7bp) [draw,draw=none] {};
  \node (3) at (36bp,57bp) [draw,draw=none] {$3$};
  \node (2) at (36bp,7bp) [draw,draw=none] {$2$};
  \node (5) at (66bp,107bp) [draw,draw=none] {$5$};
  \node (4) at (66bp,57bp) [draw,draw=none] {$4$};
  \node (7) at (6bp,107bp) [draw,draw=none] {$7$};
  \node (6) at (36bp,107bp) [draw,draw=none] {$6$};
  \node (9) at (21bp,157bp) [draw,draw=none] {$9$};
  \node (8) at (96bp,107bp) [draw,draw=none] {$8$};
  \draw [black,-] (9) ..controls (28.603bp,131.66bp) and (31.916bp,120.61bp)  .. (6);
  \draw [black,-] (5) ..controls (66bp,81.179bp) and (66bp,70.462bp)  .. (4);
  \draw [black,-] (3) ..controls (50.771bp,32.382bp) and (57.698bp,20.837bp)  .. (1);
  \draw [black,-] (7) ..controls (20.771bp,82.382bp) and (27.698bp,70.837bp)  .. (3);
  \draw [black,-] (3) ..controls (36bp,31.179bp) and (36bp,20.462bp)  .. (2);
  \draw [black,-] (4) ..controls (66bp,31.179bp) and (66bp,20.462bp)  .. (1);
  \draw [black,-] (9) ..controls (13.397bp,131.66bp) and (10.084bp,120.61bp)  .. (7);
  \draw [black,-] (8) ..controls (81.229bp,82.382bp) and (74.302bp,70.837bp)  .. (4);
  \draw [black,-] (6) ..controls (36bp,81.179bp) and (36bp,70.462bp)  .. (3);
\end{tikzpicture}%
\end{center}

%%%%%%%%%%%%%%%%%%%%%%%%%%%%%%
&
%%%%%%%%%%%%%%%%%%%%%%%%%%%%%%
\begin{center}
\begin{tikzpicture}[>=latex,line join=bevel,]
\node (1) at (66bp,7bp) [draw,draw=none] {$3$};
  \node (3) at (36bp,57bp) [draw,draw=none] {};
  \node (2) at (36bp,7bp) [draw,draw=none] {$2$};
  \node (5) at (66bp,107bp) [draw,draw=none] {$5$};
  \node (4) at (66bp,57bp) [draw,draw=none] {$4$};
  \node (7) at (6bp,107bp) [draw,draw=none] {$7$};
  \node (6) at (36bp,107bp) [draw,draw=none] {$6$};
  \node (9) at (21bp,157bp) [draw,draw=none] {$9$};
  \node (8) at (96bp,107bp) [draw,draw=none] {$8$};
  \draw [black,-] (9) ..controls (28.603bp,131.66bp) and (31.916bp,120.61bp)  .. (6);
  \draw [black,-] (5) ..controls (66bp,81.179bp) and (66bp,70.462bp)  .. (4);
  \draw [black,-] (3) ..controls (50.771bp,32.382bp) and (57.698bp,20.837bp)  .. (1);
  \draw [black,-] (7) ..controls (20.771bp,82.382bp) and (27.698bp,70.837bp)  .. (3);
  \draw [black,-] (3) ..controls (36bp,31.179bp) and (36bp,20.462bp)  .. (2);
  \draw [black,-] (4) ..controls (66bp,31.179bp) and (66bp,20.462bp)  .. (1);
  \draw [black,-] (9) ..controls (13.397bp,131.66bp) and (10.084bp,120.61bp)  .. (7);
  \draw [black,-] (8) ..controls (81.229bp,82.382bp) and (74.302bp,70.837bp)  .. (4);
  \draw [black,-] (6) ..controls (36bp,81.179bp) and (36bp,70.462bp)  .. (3);
\end{tikzpicture}%
\end{center}
%%%%%%%%%%%%%%%%%%%%%%%%%%%%%%
&
%%%%%%%%%%%%%%%%%%%%%%%%%%%%%%
\begin{center}
\begin{tikzpicture}[>=latex,line join=bevel,]
\node (1) at (66bp,7bp) [draw,draw=none] {$3$};
  \node (3) at (36bp,57bp) [draw,draw=none] {$6$};
  \node (2) at (36bp,7bp) [draw,draw=none] {$2$};
  \node (5) at (66bp,107bp) [draw,draw=none] {$5$};
  \node (4) at (66bp,57bp) [draw,draw=none] {$4$};
  \node (7) at (6bp,107bp) [draw,draw=none] {$7$};
  \node (6) at (36bp,107bp) [draw,draw=none] {};
  \node (9) at (21bp,157bp) [draw,draw=none] {$9$};
  \node (8) at (96bp,107bp) [draw,draw=none] {$8$};
  \draw [black,-] (9) ..controls (28.603bp,131.66bp) and (31.916bp,120.61bp)  .. (6);
  \draw [black,-] (5) ..controls (66bp,81.179bp) and (66bp,70.462bp)  .. (4);
  \draw [black,-] (3) ..controls (50.771bp,32.382bp) and (57.698bp,20.837bp)  .. (1);
  \draw [black,-] (7) ..controls (20.771bp,82.382bp) and (27.698bp,70.837bp)  .. (3);
  \draw [black,-] (3) ..controls (36bp,31.179bp) and (36bp,20.462bp)  .. (2);
  \draw [black,-] (4) ..controls (66bp,31.179bp) and (66bp,20.462bp)  .. (1);
  \draw [black,-] (9) ..controls (13.397bp,131.66bp) and (10.084bp,120.61bp)  .. (7);
  \draw [black,-] (8) ..controls (81.229bp,82.382bp) and (74.302bp,70.837bp)  .. (4);
  \draw [black,-] (6) ..controls (36bp,81.179bp) and (36bp,70.462bp)  .. (3);
\end{tikzpicture}%
\end{center}
%%%%%%%%%%%%%%%%%%%%%%%%%%%%%%
\\ \hline
%%%%%%%%%%%%%%%%%%%%%%%%%%%%%%
\textbf{Step 2 (continued):} The minimal element that covered $6$ was $9$, so replace $6$ with $9$. &
\textbf{Step 3:} Since $9$ was a local maximum, replace $9$ with $10$. & 
\textbf{Step 4:} Decrease all labels by $1$.  The resulting linear extension is $\partial \pi$.
%%%%%%%%%%%%%%%%%%%%%%%%%%%%%%
\\ \hline
%%%%%%%%%%%%%%%%%%%%%%%%%%%%%%
\begin{center}
\begin{tikzpicture}[>=latex,line join=bevel,]
\node (1) at (66bp,7bp) [draw,draw=none] {$3$};
  \node (3) at (36bp,57bp) [draw,draw=none] {$6$};
  \node (2) at (36bp,7bp) [draw,draw=none] {$2$};
  \node (5) at (66bp,107bp) [draw,draw=none] {$5$};
  \node (4) at (66bp,57bp) [draw,draw=none] {$4$};
  \node (7) at (6bp,107bp) [draw,draw=none] {$7$};
  \node (6) at (36bp,107bp) [draw,draw=none] {$9$};
  \node (9) at (21bp,157bp) [draw,draw=none] {};
  \node (8) at (96bp,107bp) [draw,draw=none] {$8$};
  \draw [black,-] (9) ..controls (28.603bp,131.66bp) and (31.916bp,120.61bp)  .. (6);
  \draw [black,-] (5) ..controls (66bp,81.179bp) and (66bp,70.462bp)  .. (4);
  \draw [black,-] (3) ..controls (50.771bp,32.382bp) and (57.698bp,20.837bp)  .. (1);
  \draw [black,-] (7) ..controls (20.771bp,82.382bp) and (27.698bp,70.837bp)  .. (3);
  \draw [black,-] (3) ..controls (36bp,31.179bp) and (36bp,20.462bp)  .. (2);
  \draw [black,-] (4) ..controls (66bp,31.179bp) and (66bp,20.462bp)  .. (1);
  \draw [black,-] (9) ..controls (13.397bp,131.66bp) and (10.084bp,120.61bp)  .. (7);
  \draw [black,-] (8) ..controls (81.229bp,82.382bp) and (74.302bp,70.837bp)  .. (4);
  \draw [black,-] (6) ..controls (36bp,81.179bp) and (36bp,70.462bp)  .. (3);
\end{tikzpicture}%
\end{center}
%%%%%%%%%%%%%%%%%%%%%%%%%%%%%%
&
%%%%%%%%%%%%%%%%%%%%%%%%%%%%%%
\begin{center}
\begin{tikzpicture}[>=latex,line join=bevel,]
\node (1) at (66bp,7bp) [draw,draw=none] {$3$};
  \node (3) at (36bp,57bp) [draw,draw=none] {$6$};
  \node (2) at (36bp,7bp) [draw,draw=none] {$2$};
  \node (5) at (66bp,107bp) [draw,draw=none] {$5$};
  \node (4) at (66bp,57bp) [draw,draw=none] {$4$};
  \node (7) at (6bp,107bp) [draw,draw=none] {$7$};
  \node (6) at (36bp,107bp) [draw,draw=none] {$9$};
  \node (9) at (21bp,157bp) [draw,draw=none] {$10$};
  \node (8) at (96bp,107bp) [draw,draw=none] {$8$};
  \draw [black,-] (9) ..controls (28.603bp,131.66bp) and (31.916bp,120.61bp)  .. (6);
  \draw [black,-] (5) ..controls (66bp,81.179bp) and (66bp,70.462bp)  .. (4);
  \draw [black,-] (3) ..controls (50.771bp,32.382bp) and (57.698bp,20.837bp)  .. (1);
  \draw [black,-] (7) ..controls (20.771bp,82.382bp) and (27.698bp,70.837bp)  .. (3);
  \draw [black,-] (3) ..controls (36bp,31.179bp) and (36bp,20.462bp)  .. (2);
  \draw [black,-] (4) ..controls (66bp,31.179bp) and (66bp,20.462bp)  .. (1);
  \draw [black,-] (9) ..controls (13.397bp,131.66bp) and (10.084bp,120.61bp)  .. (7);
  \draw [black,-] (8) ..controls (81.229bp,82.382bp) and (74.302bp,70.837bp)  .. (4);
  \draw [black,-] (6) ..controls (36bp,81.179bp) and (36bp,70.462bp)  .. (3);
\end{tikzpicture}%
\end{center}
%%%%%%%%%%%%%%%%%%%%%%%%%%%%%%
&
%%%%%%%%%%%%%%%%%%%%%%%%%%%%%%
\begin{center}
\begin{tikzpicture}[>=latex,line join=bevel,]
\node (1) at (66bp,7bp) [draw,draw=none] {$2$};
  \node (3) at (36bp,57bp) [draw,draw=none] {$5$};
  \node (2) at (36bp,7bp) [draw,draw=none] {$1$};
  \node (5) at (66bp,107bp) [draw,draw=none] {$4$};
  \node (4) at (66bp,57bp) [draw,draw=none] {$3$};
  \node (7) at (6bp,107bp) [draw,draw=none] {$6$};
  \node (6) at (36bp,107bp) [draw,draw=none] {$8$};
  \node (9) at (21bp,157bp) [draw,draw=none] {$9$};
  \node (8) at (96bp,107bp) [draw,draw=none] {$7$};
  \draw [black,-] (9) ..controls (28.603bp,131.66bp) and (31.916bp,120.61bp)  .. (6);
  \draw [black,-] (5) ..controls (66bp,81.179bp) and (66bp,70.462bp)  .. (4);
  \draw [black,-] (3) ..controls (50.771bp,32.382bp) and (57.698bp,20.837bp)  .. (1);
  \draw [black,-] (7) ..controls (20.771bp,82.382bp) and (27.698bp,70.837bp)  .. (3);
  \draw [black,-] (3) ..controls (36bp,31.179bp) and (36bp,20.462bp)  .. (2);
  \draw [black,-] (4) ..controls (66bp,31.179bp) and (66bp,20.462bp)  .. (1);
  \draw [black,-] (9) ..controls (13.397bp,131.66bp) and (10.084bp,120.61bp)  .. (7);
  \draw [black,-] (8) ..controls (81.229bp,82.382bp) and (74.302bp,70.837bp)  .. (4);
  \draw [black,-] (6) ..controls (36bp,81.179bp) and (36bp,70.462bp)  .. (3);
\end{tikzpicture}
\end{center}
%%%%%%%%%%%%%%%%%%%%%%%%%%%%%%

%%%%%%%%%%%%%%%%%%%%%%%%%%%%%%
\\ \hline
\end{tabular}
\caption{Constructing $\pi\partial$ from $\pi$.}
\label{figure:promotion-example2}
\end{figure}
\end{center}
\end{eg}

The definition of promotion was originally motivated by the following construction.  The Young diagram of a partition $\lambda$
(with English notation) can naturally be viewed as a poset on the boxes of the diagram ordered according to the rule that a box
is covered by any boxes immediately below it or to its right.  The linear extensions of this poset are standard Young tableaux of
shape $\lambda$.  In this context, the definition of promotion is a natural generalization of the standard promotion operator
used in the RSK algorithm. On semistandard tableaux, promotion is also used to define affine crystal structures in 
type $A$~\cite{shimozono.2002} and it has applications to the cyclic sieving phenomenon~\cite{rhoades.2010}.
The above definition of promotion for arbitrary posets is originally due to  Sch\"utzenberger~\cite{schuetzenberger.1972}.  

We now generalize the above construction to {\bf extended promotion}, whose seed is any of the numbers $1,2,\ldots,n$. The algorithm 
is similar to the original one, and we describe it for seed $j$. Start with the subposet $P_{j}$ and perform 
steps 1--3 in a completely analogous fashion. Now decrease all the labels strictly larger than $j$ by 1 in $P$ 
(not only $P_{j}$). Clearly this gives a new linear extension, which we denote $\pi \partial_{j}$.
Note that $\partial_n$ is always the identity.

The extended promotion operator can be expressed in terms of more elementary operators $\tau_i$ ($1\le i<n$) as shown 
in~\cite{haiman.1992, malvenuto_reutenauer.1994,stanprom} and has explicitly been used to count linear 
extensions in~\cite{edelman_hibi_stanley.1989}. 
Let $\pi=\pi_1 \cdots \pi_n \in \L(P)$ be a linear extension of a finite poset $P$ in one-line notation. Then
\begin{equation} \label{deftau}
	\pi \tau_i = \begin{cases}
	\pi_1 \cdots \pi_{i-1} \pi_{i+1} \pi_i \cdots \pi_n & \text{if $\pi_i$ and $\pi_{i+1}$ are not}\\
	& \text{comparable in $P$,}\\
	\pi_1 \cdots \pi_n & \text{otherwise.} \end{cases}
\end{equation}
Alternatively, $\tau_i$ acts non-trivially on a linear extension if interchanging entries $\pi_i$ and $\pi_{i+1}$ yields another 
linear extension. Then as an operator on $\L(P)$,
\begin{equation}
\label{equation.promotion tau}
 	\partial_j = \tau_j \tau_{j+1} \cdots \tau_{n-1}.
\end{equation}

%%%%%%%%%%%%%%%%%%%%%%%%%%%%%%%%%%%%%%%%%%%%%%%%%%%
\section{Promotion Markov chains} 
\label{section.markov chains}
%%%%%%%%%%%%%%%%%%%%%%%%%%%%%%%%%%%%%%%%%%%%%%%%%%%

We now consider two discrete-time Markov chains related to the
extended promotion operator.  For completeness, we briefly review the
part of the theory relevant to us.

Fix a finite poset $P$ of size $n$. The operators $\{\partial_i \mid 1\le i\le n\}$ define a directed
graph on the set of linear extensions $\L(P)$. The vertices of the
graph are the elements in $\L(P)$ and there is an edge from $\pi$ to
$\pi'$ if $\pi' = \pi\partial_i$.  We can
now consider random walks on this graph with probabilities given
formally by $x_{1},\dots,x_n$ which sum to 1. We give two
ways to assign the edge weights, see
Sections~\ref{subsection.promotion uniform} and
\ref{subsection.promotion}.  An edge with weight $x_{i}$ is traversed
with that rate.  A priori, the $x_{i}$'s must be positive real numbers
for this to make sense according to the standard techniques of Markov
chains.  However, the ideas work in much greater generality and one
can think of this as an ``analytic continuation.''

A discrete-time Markov chain is defined by the {\bf transition matrix}
$M$, whose entries are indexed by elements of the state space. In our
case, they are labeled by elements of $\L(P)$. We take the convention
that the $(\pi',\pi)$ entry gives the probability of going from $\pi
\to \pi'$. The special case of the diagonal entry at $(\pi,\pi)$ gives
the probability of a loop at the $\pi$.  This ensures that column sums
of $M$ are one and consequently, one is an eigenvalue with row (left-)
eigenvector being the all-ones vector.  A Markov chain is said to be
{\bf irreducible} if the associated digraph is strongly connected. In
addition, it is said to be {\bf aperiodic} if the greatest common
divisor of the lengths of all possible loops from any state to itself
is one.  For irreducible aperiodic chains, the Perron-Frobenius
theorem guarantees that there is a unique {\bf stationary
  distribution}. This is given by the entries of the column (right-)
eigenvector of $M$ with eigenvalue 1. Equivalently, the stationary distribution
$w(\pi)$ is the solution of the {\bf master equation}, given by
\be \label{master.equation} 
\sum_{\pi' \in \L(P)} M_{\pi,\pi'} \;w(\pi') 
= \sum_{\pi' \in \L(P)} M_{\pi',\pi} \; w(\pi).  
\ee 
Edges which are loops contribute to both sides equally and thus cancel
out.  For more on the theory of finite state Markov chains, see
\cite{levin_peres_wilmer.2009}.

We set up a running example that will be used for each case.

\begin{eg} \label{example.running example}
Define $P$ by its covering relations $\{ (1,3), (1,4), (2,3) \}$, so that its Hasse diagram is the first
diagram in the list below:
\setlength{\unitlength}{.8mm}
\begin{center}
\begin{picture}(20, 20)
\put(10,4){\circle*{1}}
\put(20,4){\circle*{1}}
\put(9,0){1}
\put(19,0){2}
\put(10,14){\circle*{1}}
\put(20,14){\circle*{1}}
\put(9,16){4}
\put(19,16){3}
\put(10,4){\line(0,1){10}}
\put(20,4){\line(0,1){10}}
\put(10,4){\line(1,1){10}}
\end{picture}
\begin{picture}(20, 20)
\put(10,4){\circle*{1}}
\put(20,4){\circle*{1}}
\put(9,0){1}
\put(19,0){2}
\put(10,14){\circle*{1}}
\put(20,14){\circle*{1}}
\put(9,16){3}
\put(19,16){4}
\put(10,4){\line(0,1){10}}
\put(20,4){\line(0,1){10}}
\put(10,4){\line(1,1){10}}
\end{picture}
\begin{picture}(20, 20)
\put(10,4){\circle*{1}}
\put(20,4){\circle*{1}}
\put(9,0){1}
\put(19,0){3}
\put(10,14){\circle*{1}}
\put(20,14){\circle*{1}}
\put(9,16){2}
\put(19,16){4}
\put(10,4){\line(0,1){10}}
\put(20,4){\line(0,1){10}}
\put(10,4){\line(1,1){10}}
\end{picture}
\begin{picture}(20, 20)
\put(10,4){\circle*{1}}
\put(20,4){\circle*{1}}
\put(9,0){2}
\put(19,0){1}
\put(10,14){\circle*{1}}
\put(20,14){\circle*{1}}
\put(9,16){4}
\put(19,16){3}
\put(10,4){\line(0,1){10}}
\put(20,4){\line(0,1){10}}
\put(10,4){\line(1,1){10}}
\end{picture}
\begin{picture}(20, 20)
\put(10,4){\circle*{1}}
\put(20,4){\circle*{1}}
\put(9,0){2}
\put(19,0){1}
\put(10,14){\circle*{1}}
\put(20,14){\circle*{1}}
\put(9,16){3}
\put(19,16){4}
\put(10,4){\line(0,1){10}}
\put(20,4){\line(0,1){10}}
\put(10,4){\line(1,1){10}}
\end{picture}
\end{center}
The remaining graphs correspond to the linear extensions
$
\L(P) = \{ 1234, 1243, 1423, 2134, 2143 \}.
$
\end{eg}

%%%%%%%%%%%%%%%%%%%%%%%%%%%%%%%%%%%%%%%%%%%%%%%%%%%
\subsection{Uniform promotion graph}
\label{subsection.promotion uniform}

The vertices of the {\bf uniform promotion graph} are labeled by elements of $\L(P)$ and there is an 
edge between $\pi$ and $\pi'$ if and only if $\pi' = \pi \partial_{j}$ for some $j \in [n]$. In this case, the edge 
is given the symbolic weight $x_{j}$.

\begin{eg}
The uniform promotion graph for the poset in Example~\ref{example.running example} 
is illustrated in Figure~\ref{figure.uniform promotion}.
\begin{figure}[h]
\begin{center}
\begin{tikzpicture} [>=triangle 45]
\draw (0,4) node {\verb!1234!};
\draw (0,0) node {\verb!1243!};
\draw (6,0) node {\verb!2134!};
\draw (6,4) node {\verb!2143!};
\draw (3,2) node {\verb!1423!};
\draw [<->] (.1,.3) to[out=80, in=-80] (.1,3.7);
\draw (.6,2) node {$x_3$};
\draw [->] (.5,4) -- (5.5,4);
\draw (3,4.3) node {$x_1$};
\draw [->] (-.1,3.7) to[out=260, in=100] (-.1,.3);
\draw (-.6,2) node {$x_2$};
\draw [<->] (.5,0) -- (5.5,0);
\draw (3,.3) node {$x_1$};
\draw [->] (.3,.3) -- (2.5,1.75);
\draw (1.6,.8) node {$x_2$};
\draw [<->] (6.1,.3) to [out=80, in = -80] (6.1,3.7);
\draw (6.6,2) node {$x_3$};
\draw [<->] (5.9,.3) to [out = 100, in = -100] (5.9,3.7);
\draw (5.4,2) node {$x_2$};
\draw [->] (5.7,3.7) -- (3.5,2.25);
\draw (4.6,3.2) node {$x_1$};
\draw [->] (2.6,2.2) to[out=105, in = -15] (.5,3.8);
\draw (2.5,3) node {$x_1$};
\draw [->] (2.4,2.1) to[out=165, in = -75] (.4,3.6);
\draw (1.5,2.1) node {$x_2$};
\end{tikzpicture}
\end{center}
\caption{Uniform promotion graph for Example~\ref{example.running example}.
Every vertex has four outgoing edges labeled $x_1$ to $x_4$ and self-loops are not drawn.
\label{figure.uniform promotion}}
\end{figure}
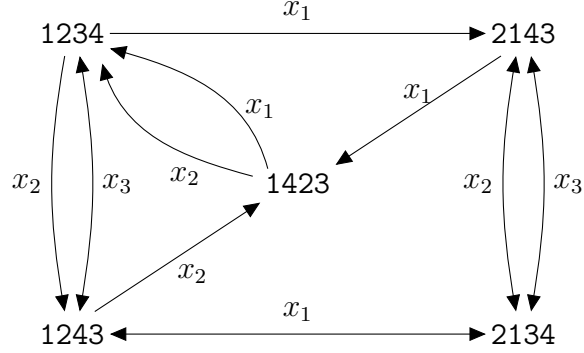
The transition matrix, with the lexicographically ordered basis, is given by
\[
\begin{pmatrix}
x_{4} & x_{3}  & x_{1}+x_{2} & 0 & 0 \\
x_{2}+x_{3} & x_{4} & 0 & x_{1} & 0 \\
0 & x_{2} & x_{3}+x_{4} & 0 & x_{1} \\
0 & x_{1} & 0 & x_{4} & x_{2}+x_{3} \\
x_{1} & 0 & 0 & x_{2} + x_{3} & x_{4}
\end{pmatrix}\;.
\]
Note that the row sums are one although the matrix is not symmetric,
so that the stationary state of this Markov chain is uniform. We state
this for general finite posets in Theorem~\ref{theorem.uniform
  promotion}.

The variable $x_{4}$ occurs only on the
diagonal in the above transition matrix.  This is because the action
of $\partial_{4}$ (or in general $\partial_n$) maps every linear
extension to itself resulting in a loop.

\end{eg}

%%%%%%%%%%%%%%%%%%%%%%%%%%%%%%%%%%%%%%%%%%%%%%%%%%%
\subsection{Promotion graph}
\label{subsection.promotion}

The {\bf promotion graph} is defined in the same fashion as the uniform promotion graph with the exception that
the edge between $\pi$ and $\pi'$ when $\pi' = \pi \partial_{j}$ is given the weight $x_{\pi_j}$. 

\begin{eg} \label{example.promotion}
The promotion graph for the poset of Example~\ref{example.running example}
is illustrated in Figure~\ref{figure.promotion}. Although it might appear that there are many more
edges here than in Figure~\ref{figure.uniform promotion}, this is not the case.
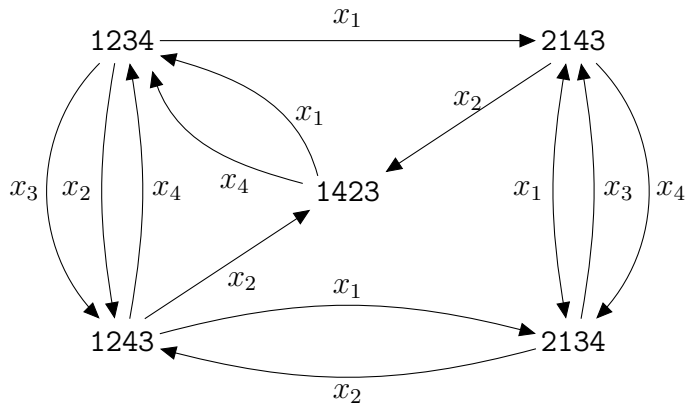
\begin{figure}[h]
\begin{center}
\begin{tikzpicture} [>=triangle 45]
\draw (0,4) node {\verb!1234!};
\draw (0,0) node {\verb!1243!};
\draw (6,0) node {\verb!2134!};
\draw (6,4) node {\verb!2143!};
\draw (3,2) node {\verb!1423!};
\draw [->] (.1,.3) to[out=80, in=-80] (.1,3.7);
\draw (.6,2) node {$x_4$};
\draw [->] (-.1,3.7) to[out=260, in=100] (-.1,.3);
\draw (-.6,2) node {$x_2$};
\draw [->] (-.3,3.7) to[out=225, in = 135] (-.3,.3);
\draw (-1.3,2) node {$x_3$};
\draw [->] (.5,4) -- (5.5,4);
\draw (3,4.3) node {$x_1$};
\draw [->] (.5,.1) to [out = 15, in = 165] (5.5,.1);
\draw (3,.7) node {$x_1$};
\draw [->] (5.5,-.1) to [out = -165, in = -15] (.5,-.1);
\draw (3,-.7) node {$x_2$};
\draw [->] (.3,.3) -- (2.5,1.75);
\draw (1.6,.8) node {$x_2$};
\draw [->] (6.1,.3) to [out=80, in = -80] (6.1,3.7);
\draw (6.6,2) node {$x_3$};
\draw [->] (6.3,3.7) to [out = -45, in = 45] (6.3,.3);
\draw (7.3,2) node {$x_4$};
\draw [<->] (5.9,.3) to [out = 100, in = -100] (5.9,3.7);
\draw (5.4,2) node {$x_1$};
\draw [->] (5.7,3.7) -- (3.5,2.25);
\draw (4.6,3.2) node {$x_2$};
\draw [->] (2.6,2.2) to[out=105, in = -15] (.5,3.8);
\draw (2.5,3) node {$x_1$};
\draw [->] (2.4,2.1) to[out=165, in = -75] (.4,3.6);
\draw (1.5,2.1) node {$x_4$};
\end{tikzpicture}
\caption{Promotion graph for Example~\ref{example.running example}.
Every vertex has four outgoing edges labeled $x_1$ to $x_4$ and self-loops are not drawn.
\label{figure.promotion}}
\end{center}
\end{figure}
The transition matrix this time is given by
\[
\begin{pmatrix}
x_{4} & x_{4}  & x_{1}+x_{4} & 0 & 0 \\
x_{2}+x_{3} & x_{3} & 0 & x_{2} & 0 \\
0 & x_{2} & x_{2}+x_{3} & 0 & x_{2} \\
0 & x_{1} & 0 & x_{4} & x_{1}+x_{4} \\
x_{1} & 0 & 0 & x_{1} + x_{3} & x_{3}
\end{pmatrix}\;.
\]
Notice that row sums are no longer one. The stationary distribution,
as a vector written in row notation is
\[
\begin{pmatrix}
1, &
\ds \frac{x_{1}+x_{2}+x_{3}}{x_{1}+x_{2}+x_{4}}, &
\ds\frac{(x_{1}+x_{2})(x_{1}+x_{2}+x_{3})}
{(x_{1}+x_{2})(x_{1}+x_{2}+x_{4})}, &
\ds \frac{x_{1}}{x_{2}}, &
\ds \frac{x_{1}(x_{1}+x_{2}+x_{3})}
{x_{2}(x_{1}+x_{2}+x_{4})} 
\end{pmatrix}^{T}\; .
\]
Again, we will give a general such result in Theorem~\ref{theorem.promotion}.
\end{eg}

%%%%%%%%%%%%%%%%%%%%%%%%%%%%%%%%%%%%%%%%%%%%%%%%%%%
\subsection{Irreducibility and stationary states}
\label{subsection.stationary}

In this section we summarize some properties of the promotion Markov chains 
of Sections~\ref{subsection.promotion uniform} and~\ref{subsection.promotion} and state
their stationary distributions. Proofs of these statements can be found in~\cite{AKS:2012}.

\begin{prop}
\label{proposition.strongly_connected}
Consider the digraph $G$ whose vertices are labeled by elements of $\L$ and whose edges are given as follows: 
for $\pi, \pi' \in \L$, there is an edge between $\pi$ and $\pi'$ in $G$ if and only if 
$\pi' = \pi \partial_{j}$ for some $j \in [n]$. Then $G$ is strongly connected.
\end{prop}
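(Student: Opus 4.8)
The plan is to establish two facts: every linear extension $\pi\in\L(P)$ reaches the identity $e$ along a directed path in $G$, and (conversely) $e$ reaches every $\pi$. Strong connectivity then follows by routing any $\pi\to\pi'$ through $e$. The tool driving both halves is that every operator in sight is a \emph{bijection} of the finite set $\L(P)$, so a directed edge can always be reversed by running once around a directed cycle.

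First I would check that each $\tau_i$ ($1\le i<n$) is an involution of $\L(P)$: it is well defined by~\eqref{deftau}, and if it acts non-trivially on $\pi$ then the two swapped entries remain incomparable in $P$, so a second application undoes it; thus $\tau_i^2=\mathrm{id}$. Since $\partial_j=\tau_j\tau_{j+1}\cdots\tau_{n-1}$ is a composition of such bijections, each $\partial_j$ is a bijection of $\L(P)$; let $m_j$ denote its order in the symmetric group on $\L(P)$, so $\partial_j^{-1}=\partial_j^{\,m_j-1}$. Hence for every directed edge $\pi\to\pi\partial_j$ of $G$ there is a directed path $\pi\partial_j\to\pi\partial_j^{\,2}\to\cdots\to\pi\partial_j^{\,m_j}=\pi$ back to $\pi$. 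Consequently the reachability relation on $G$ is symmetric (and reflexive and transitive), so it is enough to prove that every $\pi$ reaches $e$.

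For that I would pass through the classical fact that the \emph{adjacent-transposition graph} — vertex set $\L(P)$, with an edge $\{\pi,\pi\tau_i\}$ whenever $\tau_i$ acts non-trivially on $\pi$ — is connected. I would include a short proof: given $\pi\ne e$, take the least $k$ with $\pi_k\ne k$, so the labels $1,\dots,k-1$ occupy the first $k-1$ positions and $k$ sits in some position $\ell>k$; then $\pi_{\ell-1}$ cannot satisfy $\pi_{\ell-1}\prec k$ (that would force $\pi_{\ell-1}<k$, but all such labels are already placed) nor $k\prec\pi_{\ell-1}$ (that would force $k$ to appear after $\pi_{\ell-1}$ in $\pi$), so $\tau_{\ell-1}$ is a legal move that slides $k$ one step to the left; iterating this brings $\pi$ to $e$. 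To transport this into $G$, I would use the operator identity $\partial_i=\tau_i\partial_{i+1}$, that is $\tau_i=\partial_i\partial_{i+1}^{-1}=\partial_i\partial_{i+1}^{\,m_{i+1}-1}$ on $\L(P)$ for $1\le i\le n-1$, which yields the directed path in $G$
\begin{equation}
\pi \xrightarrow{\partial_i} \pi\partial_i \xrightarrow{\partial_{i+1}} \pi\partial_i\partial_{i+1} \longrightarrow \cdots \longrightarrow \pi\partial_i\partial_{i+1}^{\,m_{i+1}-1} = \pi\tau_i .
\end{equation}
Thus every edge of the adjacent-transposition graph is realized by a directed path in $G$; concatenating such paths along a $\tau$-path from $\pi$ to $e$ gives a directed path $\pi\to\cdots\to e$. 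By the symmetry of reachability we also obtain $e\to\cdots\to\pi$, and composing shows any two linear extensions are mutually reachable, so $G$ is strongly connected.

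I expect the only real work to be bookkeeping in the second step — carefully verifying that $\tau_i$ is a well-defined involution of $\L(P)$, which is what licenses reversing directed cycles — together with the incomparability checks in the connectivity proof for the $\tau$-graph; everything after that is formal. A more hands-on alternative is to sort $\pi$ to $e$ directly with promotion operators: applying $\partial_p$, where $p$ is the position of the maximal label $n$ in $\pi$, pushes $n$ to the last position (since $n$ is incomparable to every later entry), after which one would try to induct on the subposet $P\setminus\{n\}$; but the trailing factor $\tau_{n-1}$ occurring in every $\partial_j$ can dislodge $n$ again, making that induction fiddly, so I would favour the argument through the adjacent-transposition graph.
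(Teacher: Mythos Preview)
Your argument is correct and follows essentially the same route the paper uses (the proof here is deferred to \cite{AKS:2012}, but the sketch given for the analogous Proposition~\ref{proposition.subset strongly_connected} makes the strategy clear): use that each $\partial_j$ is a bijection of the finite set $\L(P)$ to make reachability symmetric, and then realize each $\tau_i$ as a word in the $\partial_\ell$'s via the recursion $\partial_i=\tau_i\partial_{i+1}$, so that connectivity of the adjacent-transposition graph on $\L(P)$ transports to strong connectivity of $G$. Your explicit bubble-sort verification that the $\tau$-graph is connected is a welcome bit of extra detail, and your observation that the alternative ``push $n$ to the end and induct'' approach is awkward because the trailing $\tau_{n-1}$ in every $\partial_j$ can dislodge $n$ is exactly why the paper's proof also goes through the $\tau_i$'s rather than arguing directly.
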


\begin{cor}
Assuming that the edge weights are strictly positive, the two Markov
chains of Sections~\ref{subsection.promotion uniform}
and~\ref{subsection.promotion} are irreducible and ergodic. Hence
their stationary states are unique.
\end{cor}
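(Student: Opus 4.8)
The plan is to obtain the corollary as a short consequence of Proposition~\ref{proposition.strongly_connected} together with the Perron--Frobenius theory recalled in Section~\ref{section.markov chains}. First I would dispatch irreducibility. Since by hypothesis each $x_i>0$, for every $\pi\in\L(P)$ and every $j\in[n]$ the transition $\pi\to\pi\partial_j$ occurs with positive probability (this probability being $x_j$ for the uniform promotion chain and $x_{\pi_j}$ for the promotion chain). Hence the set of ordered pairs $(\pi,\pi')$ with $M_{\pi',\pi}>0$ is precisely the edge set of the digraph $G$ of Proposition~\ref{proposition.strongly_connected}. As $G$ is strongly connected, the chain is irreducible.

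Next I would verify aperiodicity. Here the relevant fact is the observation from Section~\ref{section.promotion} that $\partial_n$ acts as the identity on all of $\L(P)$. Therefore each diagonal entry $M_{\pi,\pi}$ is at least $x_n$ in the uniform case, and at least $x_{\pi_n}$ in the promotion case, both strictly positive; equivalently, every state carries a self-loop. A single loop of length $1$ at some (indeed every) state forces the gcd of the lengths of all return walks to be $1$, so the chain is aperiodic. Irreducible plus aperiodic on a finite state space is exactly the ergodicity assertion.

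Finally, uniqueness of the stationary state follows from the Perron--Frobenius theorem: for an irreducible nonnegative matrix the Perron eigenvalue is simple, and here that eigenvalue is $1$ (which we already know is an eigenvalue, since the columns of $M$ sum to $1$, making $1$ the spectral radius). Thus the space of column eigenvectors of $M$ with eigenvalue $1$ is one-dimensional; its unique entrywise-nonnegative representative normalized to have coordinate sum $1$ is the stationary distribution, equivalently the unique probability solution of the master equation~\eqref{master.equation}.

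I do not expect a genuine obstacle, since all the combinatorial substance is contained in Proposition~\ref{proposition.strongly_connected} (whose proof is in \cite{AKS:2012}). The only points that deserve an explicit word are that aperiodicity must be argued at all --- handled by the identity action of $\partial_n$ --- and that in the (non-row-stochastic) promotion chain the self-loop weight is $x_{\pi_n}$ rather than $x_n$, which is nonetheless positive.
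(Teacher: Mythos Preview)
Your argument is correct and matches the approach the paper intends: the corollary is stated immediately after Proposition~\ref{proposition.strongly_connected} precisely because strong connectivity plus positive edge weights gives irreducibility, the self-loops coming from $\partial_n$ (already noted in Section~\ref{section.promotion} and in the example following the uniform promotion graph) give aperiodicity, and the Perron--Frobenius theorem recalled in Section~\ref{section.markov chains} then yields uniqueness. The paper does not spell out a proof here (it defers to~\cite{AKS:2012}), but the ingredients you use are exactly those the exposition sets up.
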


Next we state properties of the stationary state of the two discrete
time Markov chains, assuming that all $x_i$'s are strictly positive.

\begin{thm} \label{theorem.uniform promotion}
The discrete time Markov chain according to the uniform promotion
graph has the uniform stationary distribution, that is, each linear
extension is equally likely to occur.
\end{thm}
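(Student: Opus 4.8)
The plan is to establish the stronger fact that the transition matrix $M$ of the uniform promotion chain is \emph{doubly stochastic}, and then read off the theorem. With the conventions of Section~\ref{section.markov chains}, the column sums of $M$ are automatically $1$, so the whole content lies in the row sums. Since the edge from $\pi$ to $\pi\partial_j$ carries weight $x_j$, we have $M_{\pi',\pi}=\sum_{j\in[n]:\ \pi\partial_j=\pi'}x_j$, so the $\pi'$-th row sum equals
\[
\sum_{\pi\in\L(P)}M_{\pi',\pi}=\sum_{j=1}^{n}x_j\,\bigl|\{\pi\in\L(P):\pi\partial_j=\pi'\}\bigr|.
\]
This is $\sum_j x_j=1$ as soon as we know that each $\partial_j$ is a bijection of $\L(P)$, for then every $\pi'$ has a unique $\partial_j$-preimage for each $j$.

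The crux is therefore to show that $\partial_j\colon\L(P)\to\L(P)$ is a bijection for every $j\in[n]$. I would argue this through \eqref{equation.promotion tau}, $\partial_j=\tau_j\tau_{j+1}\cdots\tau_{n-1}$: a composition of bijections is a bijection (and $\partial_n=\mathrm{id}$), so it suffices to check that each elementary operator $\tau_i$ ($1\le i<n$) is a bijection of $\L(P)$. In fact $\tau_i$ is an involution. Indeed, for $\pi\in\L(P)$: if $\pi_i,\pi_{i+1}$ are comparable in $P$ then $\pi\tau_i=\pi$ and a second application changes nothing; if they are incomparable, then $\pi\tau_i$ is obtained by transposing the entries in positions $i$ and $i+1$, which is again a linear extension precisely because $\pi_i,\pi_{i+1}$ are incomparable, and in $\pi\tau_i$ those same two incomparable elements occupy positions $i,i+1$, so $(\pi\tau_i)\tau_i=\pi$. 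Hence $\tau_i^2=\mathrm{id}$ on $\L(P)$, so $\tau_i$ is invertible, and consequently so is each $\partial_j$.

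Granting double stochasticity, the theorem follows in a couple of lines. The constant vector $w(\pi)=1/|\L(P)|$ solves the master equation~\eqref{master.equation}: the right-hand side is $w(\pi)$ times the $\pi$-th column sum of $M$, the left-hand side is the constant $w$ times the $\pi$-th row sum of $M$, and both sums equal $1$. Equivalently, the all-ones column vector is a right eigenvector of $M$ with eigenvalue $1$. Since the chain is irreducible (by the Corollary following Proposition~\ref{proposition.strongly_connected}), the Perron--Frobenius theorem guarantees the stationary distribution is unique, so it must be this uniform one.

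I expect the only genuine obstacle to be the step asserting that transposing two incomparable adjacent entries of a linear extension again yields a linear extension (together with the symmetric statement needed for the involution property of $\tau_i$); this is elementary but is the one place where the order relation $\preceq$ actually enters. Everything else---the column sums, the ``composition of bijections'' reduction, and the Perron--Frobenius conclusion---is routine once that point is secured. One could also bypass $M$ altogether and phrase the argument purely as: the operators $\{\partial_j\}$ act on $\L(P)$ by permutations, so summing the master equation against the uniform distribution collapses to the identity $\sum_j x_j=1$.
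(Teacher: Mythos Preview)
Your argument is correct and matches the approach the paper indicates: the paper does not reproduce the proof here (it defers to~\cite{AKS:2012}), but Example~3.2 explicitly observes that the row sums of $M$ equal one and hence the stationary state is uniform, and the proof of Proposition~\ref{proposition.subset strongly_connected} uses that each $\partial_i$ is invertible---exactly the double-stochasticity-via-bijectivity route you take. Your reduction to $\tau_i^2=\mathrm{id}$ and the verification that swapping two incomparable adjacent entries preserves membership in $\L(P)$ are the standard steps, so nothing is missing.
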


We now turn to the promotion  graphs.
In this case we find nice product formulas for the stationary weights.

\begin{thm} \label{theorem.promotion}
The stationary state weight $w(\pi)$ of the linear extension $\pi\in \L(P)$ for the discrete time Markov chain for the 
promotion graph is given by
\be \label{formulaII}
w(\pi) = \prod_{i=1}^{n} \frac{x_{1}+ \cdots + x_{i}}
{x_{\pi_1}+ \cdots + x_{\pi_i}}\;,
\ee
assuming $w(e)=1$.
\end{thm}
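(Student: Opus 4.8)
The plan is to verify that the product formula $w(\pi) = \prod_{i=1}^n \frac{x_1 + \cdots + x_i}{x_{\pi_1} + \cdots + x_{\pi_i}}$ satisfies the master equation \eqref{master.equation}, i.e.\ that for each fixed $\pi \in \L(P)$,
\[
\sum_{\pi'} M_{\pi,\pi'}\, w(\pi') = w(\pi) \sum_{\pi'} M_{\pi',\pi},
\]
where sums range over $\pi' \in \L(P)$ and loops are dropped from both sides. Since Proposition~\ref{proposition.strongly_connected} and its corollary guarantee that the stationary distribution is unique, it suffices to check this one balance condition. The right-hand side is the total outgoing rate from $\pi$: applying $\partial_j$ to $\pi$ moves along an edge of weight $x_{\pi_j}$, so $\sum_{\pi'} M_{\pi',\pi} = \sum_{j=1}^n x_{\pi_j} - (\text{loop contributions})$; the loops occur exactly for those $j$ with $\pi\partial_j = \pi$, which always includes $j=n$. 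So the first step is to record precisely which $\partial_j$ fix $\pi$ and to write the outgoing rate as a clean sum over the non-fixing indices.

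The core of the argument is to identify, for a fixed target $\pi$, all pairs $(\sigma, j)$ with $\sigma\partial_j = \pi$ and $\sigma \neq \pi$, and to show $\sum_{(\sigma,j)} x_{\sigma_j}\, w(\sigma)$ telescopes against the outgoing rate times $w(\pi)$. Here I would use the factorization $\partial_j = \tau_j \tau_{j+1} \cdots \tau_{n-1}$ from \eqref{equation.promotion tau} together with the explicit description \eqref{deftau} of $\tau_i$: the key combinatorial fact is that $\partial_j$ acts on a linear extension in one-line notation by deleting the entry in position $j$ and cyclically shifting — more precisely, if $\pi\partial_j$ is to equal a given $\pi$, one can invert the procedure to find that the preimages of $\pi$ under the various $\partial_j$ are obtained from $\pi$ by choosing a value $a$ and cyclically rotating the prefix of $\pi$ up to the position of $a$, subject to the linear-extension constraint. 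The ratio $w(\sigma)/w(\pi)$ for such a cyclic-shift preimage $\sigma$ simplifies dramatically because the partial-sum denominators $x_{\sigma_1} + \cdots + x_{\sigma_i}$ agree with those of $\pi$ outside the rotated block and permute within it, so most factors cancel and one is left with a single ratio of the form $\frac{x_{\pi_1}+\cdots+x_{\pi_k}}{x_{a}}$ or similar. Summing the resulting terms over the allowed preimages should collapse to exactly the outgoing rate, by a partial-fractions / telescoping identity on the partial sums $x_{\pi_1}+\cdots+x_{\pi_i}$.

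I expect the main obstacle to be the bookkeeping around the linear-extension constraint: not every cyclic shift of a prefix of $\pi$ is itself a linear extension of $P$, so the set of genuine preimages $\sigma$ is pruned, and one must check that the pruned terms match the loop corrections on the outgoing side so that the telescoping still closes exactly. Handling this cleanly will likely require a lemma describing $\partial_j$ and its action on $\L(P)$ at the level of one-line notation (which positions can be "promoted" and what the inverse operation looks like), analogous to the jeu-de-taquin/bumping description in \cite{stanprom}. Once that structural lemma is in place, the remaining computation is the telescoping identity, which is routine. An alternative to the direct master-equation check would be to exhibit the chain as a quotient or lumping of a chain on a larger, more structured state space (e.g.\ relating it to the $\R$-trivial monoid picture alluded to in the introduction, or to a Tsetlin-type chain on $S_n$ whose stationary distribution is the known product form), and then push the stationary distribution through the projection; but I would attempt the direct verification first since it is self-contained given the tools already developed in the excerpt.
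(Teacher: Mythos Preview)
The paper does not prove this theorem in the text; it defers to \cite{AKS:2012}. From the surrounding discussion (see the treatment of Theorem~\ref{theorem.gen promotion} and Remark~\ref{remark.stationary subsets}(2)) it is clear that the intended method is precisely to verify the master equation~\eqref{master.equation}, so your high-level plan coincides with the paper's.

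Your description of the preimages of $\pi$ under the $\partial_j$, however, contains two inaccuracies that would derail the computation as sketched. First, each $\tau_i$ in~\eqref{deftau} is an involution on $\L(P)$, so $\partial_j=\tau_j\cdots\tau_{n-1}$ is a bijection with inverse $\tau_{n-1}\cdots\tau_j$; for every $j\in[n]$ there is therefore exactly one preimage $\sigma^{(j)}:=\pi\tau_{n-1}\cdots\tau_j\in\L(P)$. Nothing is ``pruned,'' and there is no matching against loop corrections to be done: the loop indices $j$ are simply those with $\sigma^{(j)}=\pi$, and those terms appear identically on both sides. Second, $\partial_j$ acts on the \emph{suffix} in positions $j,\ldots,n$, not on a prefix, and for a general poset $\sigma^{(j)}$ is not a cyclic shift of that suffix: the composite $\tau_{n-1}\cdots\tau_j$ moves $\pi_n$ leftward only until it meets a comparable element, after which the remaining $\tau_i$ may continue to act on entries strictly to the left of that stopping point (e.g.\ for the poset of Example~\ref{example.running example}, $\pi=2134$ has $\sigma^{(1)}=1243$, which is not a cyclic rotation of any block of $\pi$).

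What does always hold, and what drives the telescoping, is that $\sigma^{(j)}_i=\pi_i$ for $i<j$, whence
\[
\frac{w(\sigma^{(j)})}{w(\pi)}=\prod_{i=j}^{n-1}\frac{x_{\pi_1}+\cdots+x_{\pi_i}}{x_{\sigma^{(j)}_1}+\cdots+x_{\sigma^{(j)}_i}},
\]
together with the recursion $\sigma^{(j)}=\sigma^{(j+1)}\tau_j$, which changes at most the single partial sum at position $j$. With these two structural facts in place, the identity $\sum_{j=1}^n x_{\sigma^{(j)}_j}\,w(\sigma^{(j)})=(x_1+\cdots+x_n)\,w(\pi)$ does telescope, and this is the computation carried out in \cite{AKS:2012}.
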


\begin{rem} \label{remark.constant}
The entries of $w$ do not, in general, sum to one. Therefore this
is not a true probability distribution, but this is easily remedied by a multiplicative constant $Z_{P}$ depending 
only on the poset $P$.
\end{rem}

When $P$ is the $n$-antichain, then $\L = S_{n}$. In this case, the
probability distribution of Theorem~\ref{theorem.promotion} has been
studied in the past by Hendricks \cite{hendricks1,hendricks2} and is
known as the \textbf{Tsetlin library}~\cite{tsetlin}, which we now
describe. Suppose that a library consists of $n$ books
$b_{1},\dots,b_{n}$ on a single shelf. Assume that only one book is
picked at a time and is returned before the next book is picked
up. The book $b_{i}$ is picked with probability $x_{i}$ and placed at
the end of the shelf.

We now explain why promotion on the $n$-antichain is the Tsetlin
library. A given ordering of the books can be identified with a permutation
$\pi$. The action of $\partial_k$ on $\pi$ gives $\pi \tau_k \cdots
\tau_{n-1}$ by \eqref{equation.promotion tau}, where now all the
$\tau_i$'s satisfy the braid relation since none of the $\pi_j$'s are
comparable. Thus the $k$-th element in $\pi$ is moved all the way to
the end. The probability with which this happens is $x_{\pi_k}$, which
makes this process identical to the action of the Tsetlin library.

The stationary distribution of the Tsetlin library
is a special case of Theorem~\ref{theorem.promotion}. In this case, $Z_{P}$ of Remark~\ref{remark.constant} also 
has a nice product formula, leading to the probability distribution,
\be 
w(\pi) = \prod_{i=1}^{n} \frac{x_{\pi_{i}}}{x_{\pi_1}+ \cdots + x_{\pi_i}}.
\ee
Letac~\cite{letac} considered generalizations of the Tsetlin library to rooted trees (meaning that each element in $P$
besides the root has precisely one successor). Our results hold for any finite poset $P$.

%%%%%%%%%%%%%%%%%%%%%%%%%%%%%%%%%%%%%%%%%%%%%%%%%%%
\section{Partition functions and eigenvalues for rooted
forests} \label{section.chains}
%%%%%%%%%%%%%%%%%%%%%%%%%%%%%%%%%%%%%%%%%%%%%%%%%%%

For a certain class of posets, we are able to give an explicit formula
for the probability distribution for the promotion graph.  Note that
this involves computing the partition function $Z_P$ (see
Remark~\ref{remark.constant}). We can also specify all eigenvalues and
their multiplicities of the transition matrix explicitly. Proofs of
these statements can be found in~\cite{AKS:2012}.

Before we can state the main theorems of this section, we need to make
a couple of definitions.  A \textbf{rooted tree} is a connected poset,
where each node has at most one successor.  Note that a rooted tree
has a unique largest element.  A \textbf{rooted forest} is a union of
rooted trees.  A \textbf{lower set} (resp. \textbf{upper set}) $S$ in
a poset is a subset of the nodes such that if $x\in S$ and $y\preceq
x$ (resp. $y\succeq x$), then also $y\in S$.  We first give the
formula for the partition function.

\begin{thm} \label{theorem.partition.function}
Let $P$ be a rooted forest of size $n$ 
and let $x_{\preceq i} = \sum_{j \preceq i} x_j$. 
The partition function for the promotion graph is
given by
\be \label{equation.Zp}
Z_P = \prod_{i=1}^n \frac{x_{\preceq i}}{x_1 + \cdots + x_i}.
\ee
\end{thm}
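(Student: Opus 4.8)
The stationary weights $w(\pi)$ are given by Theorem~\ref{theorem.promotion} as $w(\pi) = \prod_{i=1}^n \frac{x_1+\cdots+x_i}{x_{\pi_1}+\cdots+x_{\pi_i}}$ with the normalization $w(e)=1$. The partition function is $Z_P = \sum_{\pi\in\L(P)} w(\pi)$ (or its reciprocal, depending on convention — here it is the constant so that $Z_P^{-1} w(\pi)$ sums to $1$, so in fact $Z_P = \sum_\pi w(\pi)$). Since the numerators $x_1+\cdots+x_i$ are the same for every $\pi$, we may factor them out: $Z_P = \Big(\prod_{i=1}^n (x_1+\cdots+x_i)\Big) \cdot \sum_{\pi\in\L(P)} \prod_{i=1}^n \frac{1}{x_{\pi_1}+\cdots+x_{\pi_i}}$. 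Comparing with the claimed formula \eqref{equation.Zp}, the whole content is the identity
\[
\sum_{\pi\in\L(P)} \prod_{i=1}^n \frac{1}{x_{\pi_1}+\cdots+x_{\pi_i}} \;=\; \prod_{i=1}^n \frac{1}{x_{\preceq i}},
\]
for $P$ a rooted forest. So the plan is to prove this combinatorial summation identity.

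**The main step: induction on $n$ by peeling off a maximal element.** I would argue by induction on $n=|P|$. Since $P$ is naturally labeled, the element $n$ is maximal; in a rooted forest, removing $n$ leaves a rooted forest $P' = P\setminus\{n\}$ on $[n-1]$, still naturally labeled. Every linear extension $\pi\in\L(P)$ has $\pi_n = n$ (as $n$ is the top of its tree and maximal overall), so deleting the last entry gives a bijection $\L(P)\to\L(P')$. Under this bijection, the summand for $\pi$ factors as $\frac{1}{x_1+\cdots+x_n}\cdot\prod_{i=1}^{n-1}\frac{1}{x_{\pi_1}+\cdots+x_{\pi_i}}$, and the first factor is independent of $\pi$. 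Hence $\sum_{\pi\in\L(P)}\prod_{i=1}^n\frac{1}{x_{\pi_1}+\cdots+x_{\pi_i}} = \frac{1}{x_1+\cdots+x_n}\cdot\sum_{\pi'\in\L(P')}\prod_{i=1}^{n-1}\frac{1}{x_{\pi'_1}+\cdots+x_{\pi'_i}}$. By the inductive hypothesis the second factor equals $\prod_{i=1}^{n-1}\frac{1}{x_{\preceq i}}$ (where $x_{\preceq i}$ is computed in $P'$, but for $i\ne n$ the down-set of $i$ in $P$ and in $P'$ coincide since $n$ is maximal). It remains only to observe that $x_{\preceq n} = \sum_{j\preceq n} x_j = x_1+\cdots+x_n$: indeed, in a rooted tree the top element lies above everything in that tree, and a component of $P$ not containing $n$ — wait, this is the subtle point.

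**The obstacle, and how to handle it.** The catch above is that $x_1+\cdots+x_n$ involves \emph{all} $n$ variables, whereas $x_{\preceq n}$ only sums $x_j$ for $j$ in the tree containing $n$. So the clean "peel off $n$" argument above is not quite right when $P$ is a genuine forest with several components. I expect this mismatch to be the main thing to get right. The fix is to be more careful about what is being peeled: the identity to prove, $\sum_\pi\prod_i (x_{\pi_1}+\cdots+x_{\pi_i})^{-1} = \prod_i x_{\preceq i}^{-1}$, must be re-examined — and in fact I believe the correct route is to prove the formula for $w(\pi)$ and $Z_P$ \emph{together} by induction, or equivalently to prove directly that $Z_P=\prod_i x_{\preceq i}/(x_1+\cdots+x_i)$ by induction on $n$, peeling off the element $n$ and tracking how \emph{both} the numerator product $\prod (x_1+\cdots+x_i)$ and the down-set products behave. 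Concretely: removing $n$ changes $\prod_{i=1}^n(x_1+\cdots+x_i)$ by the factor $(x_1+\cdots+x_n)$ and changes $\prod_{i=1}^n x_{\preceq i}$ by the factor $x_{\preceq n}$, so $Z_P/Z_{P'} = \frac{x_{\preceq n}}{x_1+\cdots+x_n}\cdot\big(\sum_\pi \cdots / \sum_{\pi'}\cdots\big)$, and I need the bracketed ratio to be $\frac{x_1+\cdots+x_n}{x_{\preceq n}}$. Equivalently I want
\[
\sum_{\pi\in\L(P)}\prod_{i=1}^{n}\frac{1}{x_{\pi_1}+\cdots+x_{\pi_i}}
= \frac{1}{x_{\preceq n}}\sum_{\pi'\in\L(P')}\prod_{i=1}^{n-1}\frac{1}{x_{\pi'_1}+\cdots+x_{\pi'_i}}.
\]
This is false in the forest case if we naively set $\pi_n=n$; the honest statement requires summing over the last entry $\pi_n$ ranging over \emph{all} maximal elements of $P$ (the tops of the various trees), not just $n$, and then applying induction to each $P\setminus\{\text{that maximal element}\}$. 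The key algebraic lemma will then be a partial-fractions-type identity: if $m_1,\dots,m_k$ are the maximal elements with $T_1,\dots,T_k$ their trees, then $\sum_{t=1}^k \frac{1}{x_{T_1}+\cdots+x_{T_k}}\cdot\frac{x_{T_t}}{\prod_{j\in T_t,\, j\ne m_t} x_{\preceq j}}\cdot(\cdots)$ telescopes correctly; more cleanly, one uses $\sum_{t} x_{\preceq m_t} = x_1+\cdots+x_n$ (the trees partition $[n]$ and $x_{\preceq m_t}=x_{T_t}$) together with the inductive product formula for each $\L(P\setminus m_t)$. I would carry out the induction in exactly this form: fix $P$, write $\L(P) = \bigsqcup_t \{\pi : \pi_n \text{-th... actually } \pi_n = m_t\}$, pull out the common factor $1/(x_1+\cdots+x_n)$ from the $i=n$ term, apply the inductive hypothesis to each $P\setminus\{m_t\}$ (still a rooted forest), and then check that $\sum_t x_{\preceq m_t}\cdot\prod_{i=1}^{n-1}(\text{down-set factors of }P\setminus m_t)$ reassembles into $(x_1+\cdots+x_n)\prod_{i=1}^{n-1}(\text{down-set factors of }P)$, which reduces to the partition $\sum_t x_{T_t} = x_1+\cdots+x_n$. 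This last bookkeeping step — matching down-set products across the $k$ different deletions — is where all the care is needed, but it is elementary once set up correctly. (Alternatively, one cites Theorem~\ref{theorem.promotion} and the $\R$-triviality machinery of \cite{AKS:2012}, which computes $Z_P$ as a by-product of the eigenvalue analysis; but the inductive proof above is self-contained given the earlier results.)
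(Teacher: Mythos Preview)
The paper itself does not prove Theorem~\ref{theorem.partition.function}; it refers the reader to \cite{AKS:2012}. So there is no in-paper argument to compare against, and your self-contained inductive proof is a reasonable substitute.

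Your core argument is correct. The identity
\[
\sum_{\pi\in\L(P)}\ \prod_{i=1}^{n}\frac{1}{x_{\pi_1}+\cdots+x_{\pi_i}}
\;=\;
\prod_{i=1}^{n}\frac{1}{x_{\preceq i}}
\]
for rooted forests is exactly what is needed, and the induction by decomposing $\L(P)$ according to $\pi_n=m_t$ (ranging over the maximal elements $m_1,\dots,m_k$ of $P$) works cleanly: deleting the last letter gives a bijection to $\L(P\setminus\{m_t\})$, the $i=n$ factor is the constant $1/(x_1+\cdots+x_n)$, the down-set sums $x_{\preceq j}$ for $j\neq m_t$ are unchanged upon removing the maximal element $m_t$, and finally $\sum_t x_{\preceq m_t}=\sum_t x_{T_t}=x_1+\cdots+x_n$ because the trees partition $[n]$. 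That is a complete proof once written linearly; your ``bookkeeping step'' is precisely the line
\[
\frac{1}{x_1+\cdots+x_n}\sum_{t=1}^{k} x_{\preceq m_t}\prod_{j=1}^{n}\frac{1}{x_{\preceq j}}
=\prod_{j=1}^{n}\frac{1}{x_{\preceq j}}\,.
\]

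One genuine slip to fix: your convention for $Z_P$ is inverted. Remark~\ref{remark.constant} means that $Z_P\cdot w(\pi)$ is the probability distribution, so $Z_P=\bigl(\sum_\pi w(\pi)\bigr)^{-1}$, not $\sum_\pi w(\pi)$. (Check the antichain on two elements: $\sum_\pi w(\pi)=(x_1+x_2)/x_2$, while \eqref{equation.Zp} gives $x_2/(x_1+x_2)$.) With your stated convention, the line ``Comparing with the claimed formula~\eqref{equation.Zp}, the whole content is the identity $\ldots$'' does not actually follow from what you wrote just before it. Fortunately the identity you then state and prove \emph{is} the correct one for the actual convention, so the mathematics survives; but you should correct the opening paragraph so that the reduction is internally consistent. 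Also, after deleting $m_t$ the poset is labeled by $[n]\setminus\{m_t\}$ rather than $[n-1]$; either phrase the induction for naturally labeled posets on arbitrary finite label sets, or relabel---a cosmetic point, but worth one sentence.
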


Let $L$ be a finite poset with smallest element $\hat{0}$ and largest element $\hat{1}$.
Following~\cite[Appendix C]{brown.2000}, one may associate to each element $x\in L$ a \textbf{derangement number}
$d_x$ defined as
\begin{equation}
\label{equation.derangements}
	d_x = \sum_{y\succeq x} \mu(x,y) f([y,\hat{1}])\;,
\end{equation}
where $\mu(x,y)$ is the M\"obius function for the interval 
$[x,y] := \{z \in L \mid x\preceq z \preceq y\}$~\cite[Section 3.7]{stanenum}
and $f([y,\hat{1}])$ is the number of maximal chains in the interval $[y,\hat{1}]$.

A permutation is a \textbf{derangement} if it does not have any fixed
points.  A linear extension $\pi$ is called a \textbf{poset
  derangement} if it is a derangement when considered as a
permutation.  Let $\mathfrak d_P$ be the number of poset derangements
of the poset $P$. 

A \textbf{lattice} $L$ is a poset in which any two elements have a unique supremum (also called join) and 
a unique infimum (also called meet). For $x,y\in L$ the join is denoted by $x \vee y$, whereas the meet is
$x\wedge y$. For an \textbf{upper semi-lattice} we only require the existence of a unique supremum of
any two elements.

\begin{thm} \label{theorem.eigenvalues}
Let $P$ be a rooted forest of size $n$ and $M$ the transition matrix of
the promotion graph of Section~\ref{subsection.promotion}. Then
\begin{equation*}
	\det(M-\lambda \mathbbm{1}) = \prod_{\substack{ S \subseteq
            [n]\\ \text{$S$ upper set in $P$}}} (\lambda - x_S)^{d_S},
\end{equation*}
where $x_S = \sum_{i\in S} x_i$ and $d_S$ is the derangement number in the lattice $L$ (by inclusion) of upper
sets in $P$. In other words, for each subset $S\subseteq [n]$, which is an upper set in $P$, there is an eigenvalue
$x_S$ with multiplicity $d_S$.
\end{thm}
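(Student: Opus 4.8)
The plan is to realize the promotion Markov chain of Section~\ref{subsection.promotion} as a genuine random walk on a finite $\R$-trivial monoid, and then invoke Steinberg's spectral theory of such walks~\cite{steinberg.2006,steinberg.2008}, which generalizes Brown's computation for left regular bands~\cite{brown.2000,brown.2004}.

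First I would reparametrize the chain. From a linear extension $\pi$ the promotion graph sends $\pi\mapsto\pi\partial_j$ with weight $x_{\pi_j}$; equivalently one selects a \emph{value} $v\in[n]$ with the state-independent probability $x_v$ and applies the operator $\overline\partial_v\colon\pi\mapsto\pi\partial_{\pi^{-1}_v}$ --- ``promote starting from the cell occupied by $v$''. Let $\Monoid_P$ be the transformation monoid of $\L(P)$ generated by $\{\overline\partial_v:v\in[n]\}$. With the conventions of Section~\ref{section.markov chains} the transition matrix is then $M=\sum_{v=1}^{n}x_v\,\rho(\overline\partial_v)$, where $\rho$ is the representation of $\Monoid_P$ on $\mathbb{C}\L(P)$ afforded by the right action; by Proposition~\ref{proposition.strongly_connected} this module is cyclic, generated by the identity linear extension $e$. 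This is the exact analogue of the familiar description of the Tsetlin library as a walk on the free left regular band, and the apparent state-dependence of the weights $x_{\pi_j}$ has disappeared: the \emph{operator} $\overline\partial_v$ is picked with the state-independent probability $x_v$.

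The heart of the argument is the claim that $\Monoid_P$ is $\R$-trivial \emph{when $P$ is a rooted forest}. Here I would rely on the combinatorial analysis of iterated promotion on forests sketched in Section~\ref{section.R trivial}: repeatedly applying promotions seeded at the cells occupied by a set $U$ of values eventually ``freezes'' those values into a configuration depending only on the upper set $\langle U\rangle$ they generate in $P$, and composing two such transformations corresponds to taking the union of the associated upper sets. Concretely, one attaches to each $m\in\Monoid_P$ a support $\sigma(m)$ in the lattice $L$ of upper sets of $P$ (ordered by inclusion), with $\sigma(\overline\partial_v)=\{u\in P:u\succeq v\}$ and $\sigma(mm')=\sigma(m)\cup\sigma(m')$, and shows that once $\sigma(m)$ is fixed the transformation $m$ acts idempotently; this yields a well-founded partial order on $\Monoid_P$ compatible with right multiplication, which is equivalent to $\R$-triviality. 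This is the only place where the shape of $P$ genuinely enters --- and, as Section~\ref{section.other posets} indicates, exactly where the hypothesis cannot be dropped --- so I expect it to be the main obstacle: it demands a careful poset-theoretic study of how successive promotions interfere on a rooted forest.

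Granting $\R$-triviality, the eigenvalues and multiplicities follow formally. By Steinberg's spectral theorem for random walks on $\R$-trivial monoids, $M$ is triangularizable over $\mathbb{C}$ and its eigenvalues are indexed by the support lattice $L$: the eigenvalue attached to an upper set $S$ is the total weight of the generators whose support lies below $S$, namely $\sum\{x_v:\sigma(\overline\partial_v)\subseteq S\}=\sum_{v\in S}x_v=x_S$ (using that $S$ is an upper set). The multiplicity of $x_S$ is the dimension of the corresponding top composition factor of $\mathbb{C}\L(P)$, and the M\"obius computation over $L$ of Brown~\cite[Appendix C]{brown.2000} --- recorded in~\eqref{equation.derangements} --- identifies it with the derangement number $d_S=\sum_{T\supseteq S}\mu(S,T)f([T,\hat1])$, where $f([T,\hat1])$ counts maximal chains from $T$ to $[n]$ in $L$. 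Two consistency checks confirm nothing is missing: $d_{[n]}=1$, matching the uniqueness of the stationary distribution of the ergodic chain, and $\sum_{S\in L}d_S=f([\hat0,\hat1])=|\L(P)|$ by M\"obius inversion, so every eigenvalue of the $|\L(P)|\times|\L(P)|$ matrix $M$ is accounted for. Collecting these facts gives $\det(M-\lambda\mathbbm{1})=\prod_{S}(\lambda-x_S)^{d_S}$, the product running over upper sets $S$ of $P$, as asserted.
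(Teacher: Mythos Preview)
Your proposal is correct and follows essentially the same route as the paper: reparametrize by the value-indexed operators $\overline\partial_v$ (the paper's $\hat\partial_v$), prove that the resulting transformation monoid is $\R$-trivial when $P$ is a rooted forest, and then apply Steinberg's spectral theorem for $\R$-trivial monoids together with Brown's M\"obius/derangement computation over the lattice $L$ of upper sets to read off the eigenvalues $x_S$ and multiplicities $d_S$. The only cosmetic difference is in the packaging of the $\R$-triviality step --- the paper works with the right-factor invariant $\Rfactor(x)$ and its identification with left stabilizers of idempotents, rather than with your support map $\sigma$ --- but both encode the same combinatorics supplied by Lemma~\ref{lemma.action of del}.
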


The proof of Theorem~\ref{theorem.eigenvalues} follows from the fact
that the monoid corresponding to the transition matrix $M$
is $\R$-trivial.  
When $P$ is a union of chains, which is a special
case of rooted forests, we can express the eigenvalue multiplicities
directly in terms of the number of poset derangements.

\begin{thm} \label{theorem.poset derangements}
Let $P = [n_1] + [n_2] + \cdots + [n_k]$ be a union of chains of size $n$ whose
elements are labeled consecutively within chains. Then
\begin{equation*}
	\det(M-\lambda \mathbbm{1}) = \prod_{\substack{ S \subseteq [n]\\ \text{$S$ upper set in $P$}}}
	(\lambda - x_S)^{\mathfrak d_{P \setminus S}},
\end{equation*}
where $\mathfrak{d}_\emptyset =1$.
\end{thm}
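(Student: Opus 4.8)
The plan is to deduce Theorem~\ref{theorem.poset derangements} from Theorem~\ref{theorem.eigenvalues} by showing that for a union of chains $P = [n_1] + \cdots + [n_k]$, the derangement number $d_S$ in the lattice $L$ of upper sets of $P$ equals the number of poset derangements $\mathfrak{d}_{P\setminus S}$ of the complementary lower set. Since Theorem~\ref{theorem.eigenvalues} already gives $\det(M-\lambda\mathbbm{1}) = \prod_{S} (\lambda - x_S)^{d_S}$ over upper sets $S$, it suffices to identify the exponents. First I would observe that when $P$ is a union of chains, an upper set $S$ is exactly a choice of a ``top piece'' $[a_i+1,\ldots,n_i]$ from each chain $[n_i]$ (taking the elements above some cutoff in each chain, possibly empty or the whole chain), and its complement $P\setminus S$ is a lower set which is again a union of chains, namely $[a_1] + \cdots + [a_k]$. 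Thus the interval $[S,\hat 1]$ in $L$ is itself isomorphic to the lattice of upper sets of $P \setminus S$, so it is enough to treat the case $S = \emptyset$ and prove $d_\emptyset = \mathfrak{d}_P$.

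For that case, I would unwind the definition \eqref{equation.derangements}: $d_{\hat 0} = \sum_{y \in L} \mu(\hat 0, y) f([y,\hat 1])$, where $f([y,\hat 1])$ counts maximal chains in $L$ from $y$ up to the full poset $\hat 1 = [n]$. The key combinatorial input is the interpretation of maximal chains in the lattice of upper sets: a maximal chain in $[y,\hat 1]$ corresponds to building up the upper set one element at a time in a way compatible with the poset order, which is precisely a linear extension of the complement $P\setminus y$ read in reverse. Hence $f([y,\hat 1]) = e(P \setminus y)$, the number of linear extensions of the induced subposet on $P \setminus y$. Substituting, $d_\emptyset = \sum_{y} \mu(\hat 0, y)\, e(P\setminus y)$, and the sum runs over lower sets $P\setminus y$ of $P$; reindexing by the lower set $T = P \setminus y$ gives $d_\emptyset = \sum_{T \text{ lower set}} (\pm) e(T)$ with signs governed by the Möbius function of the Boolean-like lattice of upper sets. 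I would then recognize this alternating sum as an inclusion–exclusion over fixed points: a linear extension of $P$ that fixes a prescribed set of positions restricts, on the initial segment it fixes, to a linear extension of a lower set, and the standard derangement sieve (as in the classical identity $D_n = \sum_k (-1)^k \binom nk (n-k)!$, and its poset generalization in~\cite[Appendix C]{brown.2000}) collapses the signed sum to $\mathfrak{d}_P$, the number of fixed-point-free linear extensions.

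The main obstacle I anticipate is making the bijection between maximal chains in the upper-set lattice and linear extensions of the complement completely precise, and correctly matching the Möbius signs so that the inclusion–exclusion genuinely produces the derangement count rather than some other signed statistic; in particular one must check that for a union of chains the lattice of upper sets is a product of chains (one factor of length $n_i+1$ per chain $[n_i]$), so that $\mu(\hat 0, y) = (-1)^{|y|}$ on the elements $y$ that are joins of atoms and $0$ otherwise, which is exactly what is needed for the sieve to run. Once that structural fact is in hand, the identification $d_S = \mathfrak{d}_{P\setminus S}$ follows uniformly, and plugging into Theorem~\ref{theorem.eigenvalues} yields the claimed characteristic polynomial, with the convention $\mathfrak{d}_\emptyset = 1$ matching $d_{\hat 1} = f([\hat 1,\hat 1]) = 1$.
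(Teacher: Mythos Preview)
The paper does not actually prove this theorem: it is one of the results surveyed from~\cite{AKS:2012}, and Section~\ref{section.chains} explicitly says that proofs ``can be found in~\cite{AKS:2012}.'' So there is no in-paper argument to compare against; your outline is being measured only against its own internal correctness.

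Your reduction is the right one. Deducing the statement from Theorem~\ref{theorem.eigenvalues} by proving $d_S=\mathfrak d_{P\setminus S}$, reducing to $S=\emptyset$ via the self-similarity of the lattice of upper sets, identifying $f([y,\hat 1])$ with $e(P\setminus y)$, and computing the M\"obius function of a product of chains so that only subsets $y\subseteq\max(P)$ survive with sign $(-1)^{|y|}$ --- all of this is correct and is the natural route.

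The part that is not yet a proof is the last step. What you actually need is the identity
\[
\mathfrak d_P \;=\; \sum_{T\subseteq \max(P)} (-1)^{|T|}\, e\bigl(P\setminus T\bigr),
\]
and your description of it as ``a linear extension that fixes a prescribed set of positions restricts, on the initial segment it fixes, to a linear extension of a lower set'' does not match the situation: the M\"obius support is over subsets of \emph{maximal} elements, one per chain, not over initial segments or arbitrary fixed sets, and $e(P\setminus T)$ is not the number of linear extensions fixing $T$ pointwise. The honest inclusion--exclusion interpretation is: for $J\subseteq[k]$, the number of $\pi\in\L(P)$ having at least one fixed point in every chain $j\in J$ equals $e(P\setminus\{N_j:j\in J\})$, i.e.\ shortening each selected chain by one. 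That equality is true, but it requires a bijection (or an inductive argument) specific to unions of chains; it is not the classical derangement sieve transported verbatim. You flag this as the main obstacle, which is fair, but as written the sketch of how to overcome it points in the wrong direction.
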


Note that the antichain is a special case of a rooted forest and in
particular a union of chains.  In this case the Markov chain is the
Tsetlin library and all subsets of $[n]$ are upper (and lower)
sets. Hence Theorem~\ref{theorem.eigenvalues} specializes to the
results of Donnelly~\cite{donnelly.1991}, Kapoor and
Reingold~\cite{kapoor_reingold.1991}, and
Phatarford~\cite{phatarfod.1991} for the Tsetlin library.

The case of unions of chains, which are consecutively labeled, can be
interpreted as looking at a parabolic subgroup of $S_n$. If there are
$k$ chains of lengths $n_i$ for $1\le i \le k$, then the parabolic
subgroup is $S_{n_1} \times \cdots \times S_{n_k}$. In the realm of
the Tsetlin library, there are $n_i$ books of the same color. The
Markov chain consists of taking a book at random and placing it at the
end of the stack.

%%%%%%%%%%%%%%%%%%%%%%%%%%%%%%%%%%%%%%%%%%%%%%%%%%%
\section{$\R$-trivial monoids}
\label{section.R trivial}
%%%%%%%%%%%%%%%%%%%%%%%%%%%%%%%%%%%%%%%%%%%%%%%%%%%

In this section we briefly outline the proof of Theorem~\ref{theorem.eigenvalues}. More details can be found
in~\cite{AKS:2012}.

A finite \textbf{monoid} $\Monoid$ is a finite set with an associative multiplication and an identity element.
Green~\cite{green.1951} defined several preorders on $\Monoid$. 
In particular for $x,y\in \Monoid$ the $\R$- and $\LL$-order is defined as
\begin{equation}
\label{equation.LR order}
\begin{split}
	x \ge_{\R} y & \quad \text{if $y=xu$ for some $u\in \Monoid$,}\\
	x \ge_{\LL} y & \quad \text{if $y=ux$ for some $u\in \Monoid$.}
\end{split}	
\end{equation}
This ordering gives rise to equivalence classes ($\R$-classes or $\LL$-classes) 
\begin{equation*}
\begin{split}
	x \; \R \; y &\quad \text{if and only if $x\Monoid = y\Monoid$,}\\
	x \; \LL \; y &\quad \text{if and only if $\Monoid x = \Monoid y$.}
\end{split}
\end{equation*}
The monoid $\Monoid$ is said to be \textbf{$\R$-trivial} (resp. \textbf{$\LL$-trivial}) if all $\R$-classes (resp. $\LL$-classes)
have cardinality one.

Now let $P$ be a rooted forest of size $n$ and $\prom_i$ for $1\le
i\le n$ the operators on $\L(P)$ defined by the promotion graph of
Section~\ref{subsection.promotion}. That is, for $\pi,\pi' \in \L(P)$,
the operator $\prom_i$ maps $\pi$ to $\pi'$ if $\pi' = \pi
\partial_{\pi^{-1}_i}$.  We are interested in the monoid
$\Monoid^{\prom}$ generated by $\{\prom_i \mid 1\le i \le n\}$.
The next lemma shows that the action of the generators $\prom_i$
for rooted forests is very similar to the action of the operators of the Tsetlin
library by moving the letter $i$ to the end; the difference in this case is that
letters above $i$ need to be reordered according to the poset.

\begin{lem}
\label{lemma.action of del}
Let $P$ and $\prom_i$ be as above, and $\pi \in \L(P)$. Then $\pi \prom_i$ is the linear extension in $\L(P)$
obtained from $\pi$ by moving the letter $i$ to position $n$ and reordering all letters $j \succeq i$.
\end{lem}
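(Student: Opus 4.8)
The plan is to prove the precise form of the statement. Write $C=\{j\in P:j\succeq i\}$; since $P$ is a rooted forest, the up-set of any element is a chain, so $C$ is a chain $i=j_{0}\prec j_{1}\prec\cdots\prec j_{m}$ with $j_{m}$ the maximal element (root) of the tree of $P$ containing $i$. The assertion I would prove is that $\pi\prom_{i}$ is obtained from $\pi$ by first deleting the letter $i$, shifting every letter to its right one step to the left and appending $i$ at position $n$, and then redistributing the letters of $C$ among the positions they then occupy so that they appear in increasing order (equivalently in poset order, since $P$ is naturally labeled). The argument is by induction on $m$, unwinding $\pi\prom_{i}=\pi\partial_{k}=\pi\tau_{k}\tau_{k+1}\cdots\tau_{n-1}$ with $k:=\pi^{-1}_{i}$.

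First I would record three elementary facts: no element of $P$ lies strictly above $j_{m}$; since $\pi\in\L(P)$ and $j_{0}\prec\cdots\prec j_{m}$, the positions $p_{\ell}:=\pi^{-1}_{j_{\ell}}$ satisfy $k=p_{0}<p_{1}<\cdots<p_{m}$; and every letter $\pi_{q}$ with $p_{0}<q<p_{1}$ is incomparable to $i$. The last fact holds because $\pi_{q}$ cannot lie below $i$ (that would contradict $\pi\in\L(P)$, as $q>\pi^{-1}_{i}$), while if it lay above $i$ it would belong to $C$, hence equal some $j_{\ell}$ with $\ell\ge1$, forcing $q=p_{\ell}\ge p_{1}$ --- impossible. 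This is where the rooted-forest hypothesis genuinely enters: without the chain structure of $C$, even the ``reordering'' step is not well defined.

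For the base case $m=0$ the seed $i$ is maximal, so every letter of $\pi$ after position $k$ is incomparable to $i$; hence $\tau_{k},\tau_{k+1},\dots,\tau_{n-1}$ successively transpose $i$ with the letter just to its right, so $\partial_{k}$ carries $i$ from position $k$ to position $n$ while sliding every other letter originally right of $k$ one step left, and $C=\{i\}$ makes the reordering step vacuous. For the inductive step ($m\ge1$) I would factor $\partial_{k}=(\tau_{p_{0}}\cdots\tau_{p_{1}-2})(\tau_{p_{1}-1})(\tau_{p_{1}}\cdots\tau_{n-1})$. By the incomparability fact, the first block transposes $i$ with each right neighbour in turn, carrying it from $p_{0}$ to $p_{1}-1$ and sliding the letters originally at positions $p_{0}+1,\dots,p_{1}-1$ one step left; call the result $\rho$. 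It agrees with $\pi$ at every position $\ge p_{1}$, so $j_{1}$ is still at $p_{1}$ and $\tau_{p_{1}-1}$ acts trivially (since $i\prec j_{1}$). Therefore $\pi\prom_{i}=\rho\,\tau_{p_{1}}\cdots\tau_{n-1}=\rho\,\partial_{p_{1}}=\rho\,\partial_{\rho^{-1}_{j_{1}}}=\rho\,\prom_{j_{1}}$, and since the up-set of $j_{1}$ is the chain $\{j_{1},\dots,j_{m}\}$ of length $m-1$, the induction hypothesis applies to $\rho\,\prom_{j_{1}}$.

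It then remains to check that composing ``bubble $i$ from $p_{0}$ to $p_{1}-1$'' with ``move $j_{1}$ to position $n$ in $\rho$ and sort $\{j_{1},\dots,j_{m}\}$ increasingly'' gives exactly ``move $i$ to position $n$ in $\pi$ and sort $\{j_{0},\dots,j_{m}\}$ increasingly'': in either description the letters of $C$ land at $j_{0}\mapsto p_{1}-1$, $j_{1}\mapsto p_{2}-1$, $\dots$, $j_{m-1}\mapsto p_{m}-1$, $j_{m}\mapsto n$, while every letter not in $C$ that was originally to the right of position $p_{0}$ moves exactly one step left, keeping its relative order. This side-by-side comparison of the two position assignments is the one genuinely computational step, and I would make it transparent by tabulating the positions in $\pi$, in $\rho$, and in the two candidate outputs; small posets such as the disjoint union of a three-element chain with an isolated point serve as a useful check.
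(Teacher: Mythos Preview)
The paper states this lemma without proof (it is a survey, and the lemma is quoted from~\cite{AKS:2012}), so there is no in-paper argument to compare against. Your proof is correct and self-contained: the induction on the length $m$ of the chain $C=\{j\succeq i\}$ is the natural approach, and each step is sound. In particular, the key observation that any letter strictly between positions $p_0$ and $p_1$ is incomparable to $i$ (because the up-set of $i$ is a chain in a rooted forest) is exactly what makes the first block $\tau_{p_0}\cdots\tau_{p_1-2}$ act as a pure bubble, and your handling of the trivial action of $\tau_{p_1-1}$ followed by the recursive call to $\hat\partial_{j_1}$ is clean. The final bookkeeping matches up as you say: in both descriptions the letters of $C$ land at positions $p_1-1,p_2-1,\dots,p_m-1,n$ in order, and every non-$C$ letter to the right of $p_0$ shifts exactly one step left. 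One minor presentational point: when you say ``chain of length $m-1$'' for $\{j_1,\dots,j_m\}$, make clear you mean the inductive parameter drops from $m$ to $m-1$; otherwise a reader may stumble on the convention for ``length.''
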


\begin{eg}
Let $P$ be the union of a chain of length 3 and a chain of length 2, where the first chain is labeled by the 
elements $\{1,2,3\}$ and the second chain by $\{4,5\}$. Then $41235 \; \prom_1 = 41253$, which is obtained by moving
the letter 1 to the end of the word and then reordering the letters $\{1,2,3\}$, so that the result is again a linear extension
of $P$.
\end{eg}

Let $M$ be the transition matrix of the promotion graph of
Section~\ref{subsection.promotion}. Define $\Monoid$ to be the monoid
generated by $\{ G_i \mid 1\le i \le n\}$, where $G_i$ is the matrix
$M$ evaluated at $x_i =1$ and all other $x_j=0$.  We are now ready to
state the main result of this section.

\begin{thm} \label{theorem.r trivial}
$\Monoid$ is $\R$-trivial.
\end{thm}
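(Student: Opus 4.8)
The plan is to realize $\Monoid$ as a monoid of maps on $\L(P)$ and then extract from Lemma~\ref{lemma.action of del} a combinatorial monovariant attached to each generator that forbids non-trivial $\R$-classes. Concretely: since $G_i$ is the $0$--$1$ matrix of the map $\prom_i\colon\pi\mapsto\pi\prom_i$ on $\L(P)$, and matrix multiplication of such $0$--$1$ matrices corresponds to composition of the underlying maps, the assignment $f\mapsto(\text{matrix of }f)$ is an isomorphism from the transformation monoid $\mathcal T$ of $\L(P)$ generated by $\prom_1,\dots,\prom_n$ (under ordinary composition) onto $\Monoid$; so it suffices to show $\mathcal T$ is $\R$-trivial. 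I would then use the standard reduction to generators: a finite monoid is $\R$-trivial as soon as, for every element $m$ and every generator $\prom_i$, the relation $m\mathrel{\R}m\circ\prom_i$ forces $m\circ\prom_i=m$. (An arbitrary instance $m\mathrel{\R}m\circ n$, with $n$ a composite of generators, makes $m$, $m\circ\prom_{i_1}$, $m\circ\prom_{i_1}\circ\prom_{i_2},\dots$ all $\R$-equivalent, and the generator statement then collapses them one factor at a time.)

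\emph{The monovariant} is the heart of the matter. For $i\in[n]$ let $\uparrow i=\{j\in P:j\succeq i\}$ be the principal up-set; because $P$ is a rooted forest, $\uparrow i$ is a \emph{chain}, so it has exactly one linear extension, the increasing one. For $\pi\in\L(P)$ put $\phi_i(\pi)=\sum_{j\succeq i}\pi^{-1}_j$, the sum of the positions occupied by the letters of $\uparrow i$. By Lemma~\ref{lemma.action of del}, forming $\pi\prom_i$ means: delete the letter $i$ from its position $p=\pi^{-1}_i=\min\{\pi^{-1}_j:j\succeq i\}$, shift the later letters one step to the left, append $i$ in position $n$, and finally re-sort the letters of $\uparrow i$ within the positions they then occupy. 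The set $A$ of positions used by $\uparrow i$ therefore becomes $\bigl(A\setminus\{\min A\}\bigr)-1$ together with $\{n\}$, and re-sorting only rearranges which letter sits where, so
\[
\phi_i(\pi\prom_i)-\phi_i(\pi)=n+1-\min A-|A|\ \geq\ 0 ,
\]
with equality exactly when $A=\{\,n-|A|+1,\dots,n\,\}$. In that boundary case $\uparrow i$ already occupies a final block of positions and, being a chain, already sits there in increasing order, so re-sorting does nothing and $\pi\prom_i=\pi$. Hence $\phi_i$ strictly increases along every non-loop edge of the functional digraph of the map $\prom_i$; in particular the only periodic points of $\prom_i$ are its fixed points, equivalently $\prom_i^k=\prom_i^{k+1}$ for $k$ large. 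This is precisely where the rooted-forest hypothesis enters: for a general poset $\uparrow i$ need not be a chain, the final re-sorting may truly permute a block, and $\prom_i$ may have non-trivial cycles.

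\emph{Putting it together, and the main obstacle.} It remains to deduce from the monovariant that $m\mathrel{\R}m\circ\prom_i$ implies $m\circ\prom_i=m$. Since $\R$-equivalent maps in $\mathcal T$ have equal image, the hypothesis forces $\im(m)=\im(m\circ\prom_i)=m\bigl(\im(\prom_i)\bigr)$. I would then analyze, by induction on a word $w$ with $m=f_w=\prom_{i_1}\circ\cdots\circ\prom_{i_k}$ and repeated use of Lemma~\ref{lemma.action of del}, which linear extensions make up $\im(f_w)$ and exactly when post-composing with $\prom_i$ leaves this image unchanged; the upshot should be that this happens only when $\prom_i$ is already ``absorbed'' by $m$, which together with the no-non-trivial-cycle property of $\prom_i$ yields $m\circ\prom_i=m$. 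I expect this last part — transferring the local statement about orbits of $\prom_i$ on $\L(P)$ into antisymmetry of $\le_{\R}$ on the abstract monoid, and in particular controlling $m$ off its image — to be the real obstacle; it is the technical core carried out in \cite{AKS:2012}. (An alternative route that bypasses some of this is to verify directly, again using Lemma~\ref{lemma.action of del}, the identity $(xy)^{\omega}x=(xy)^{\omega}$, where $(\cdot)^{\omega}$ denotes the idempotent power, which characterizes finite $\R$-trivial monoids.)
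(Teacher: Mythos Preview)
Your monovariant $\phi_i$ is correct and shows cleanly that each generator $\prom_i$, viewed as a self-map of $\L(P)$, has no nontrivial cycles. But that property of the individual generators does \emph{not} force $\R$-triviality of the monoid they generate, and the passage you yourself flag as ``the real obstacle'' is genuinely missing from the proposal. A concrete obstruction: in the monoid $B_2^1$ (the $2\times2$ matrix units $e_{ij}$ together with $0$ and an identity), the generators $e_{12}$ and $e_{21}$ act by right multiplication on $B_2^1$ as cycle-free maps (everything is eventually sent to $0$), yet $e_{11}\mathrel{\R}e_{12}$ while $e_{11}\neq e_{12}$. So the step ``$m\mathrel{\R}m\circ\prom_i\Rightarrow m\circ\prom_i=m$'' really needs structural input about the specific maps $\prom_i$ beyond their individual aperiodicity, and your induction-on-words outline does not supply it. Equality of images, which is all $\R$-equivalence guarantees in a transformation monoid, is far from pointwise equality of maps.

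The paper's route is organized around a \emph{global} invariant on monoid elements rather than a per-generator monovariant: for $x\in\Monoid^{\prom}$ one lets $\rfactor(x)$ be the longest common suffix of all linear extensions in $\im(x)$, and $\Rfactor(x)$ its set of letters (automatically an upper set of $P$). The crux is that for an idempotent $x$ one has $\Rfactor(x)=\{\,i:\prom_i x=x\,\}$, identifying the right-factor set with the left stabilizer; this yields an honest partial order compatible with $\le_{\LL}$ on $\Monoid^{\prom}$ (equivalently $\le_{\R}$ on the matrix monoid $\Monoid$), and $\R$-triviality follows. Your boundary case ``$\uparrow i$ occupies a final block and the map fixes $\pi$'' is precisely the local shadow of this invariant; the missing idea is to aggregate those final blocks into the common-suffix function on $\im(x)$ and relate it to stabilizers of idempotents.
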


\begin{rem} \label{remark.L trivial}
Considering the matrix monoid $\Monoid$ is equivalent to considering the abstract monoid $\Monoid^{\prom}$ 
generated by $\{\prom_i \mid 1\le i \le n\}$. Since the operators $\prom_i$ act on the right on linear extensions, the 
monoid $\Monoid^{\prom}$ is $\LL$-trivial instead of $\R$-trivial.
\end{rem}

The proof of Theorem~\ref{theorem.r trivial} exploits Lemma~\ref{lemma.action of del} by proving that there is 
an order on idempotents using right factors. For $x\in \Monoid^{\prom}$,
let $\rfactor(x)$ be the maximal common right factor of all elements in the image of $x$, that
is, all elements $\pi \in \im(x)$ can be written as $\pi = \pi_1 \cdots  \pi_m \rfactor(x)$ and there is no
bigger right factor for which this is true. Let us also define the set of entries in the right
factor $\Rfactor(x) = \{ i \mid i\in\rfactor(x) \}$. Note that since all elements in the image set of $x$
are linear extensions of $P$, $\Rfactor(x)$ is an upper set of $P$.
Theorem~\ref{theorem.r trivial} is then established by showing that for idempotents $x$, the set
$\Rfactor(x)$ is the same as the left stabilizer $\{i \mid \prom_i x = x\}$ which imposes a partial order.

\begin{eg} \label{example.r trivial monoid}
Let $P$ be the poset on three elements $\{1,2,3\}$, where $2$ covers $1$ and there are no further relations.
The linear extensions of $P$ are $\{123,132,312\}$. 
The monoid $\Monoid$ with $\R$-order, where an edge labeled $i$ means right multiplication by $G_i$,
is depicted in Figure~\ref{figure.monoid}. From the picture it is clear that the elements in the monoid are partially ordered.
\end{eg}
This confirms Theorem~\ref{theorem.r trivial} that the monoid is $\R$-trivial.
\begin{figure}[h]
\begin{center}
\begin{tikzpicture}[>=latex,line join=bevel,scale=0.7]
\node (000111000) at (34bp,322bp) [draw,draw=none] {$\begin{array}{l}\verb|[0|\phantom{x}\verb|0|\phantom{x}\verb|0]|\\\verb|[1|\phantom{x}\verb|1|\phantom{x}\verb|1]|\\\verb|[0|\phantom{x}\verb|0|\phantom{x}\verb|0]|\end{array}$};
  \node (000100011) at (156bp,222bp) [draw,draw=none] {$\begin{array}{l}\verb|[0|\phantom{x}\verb|0|\phantom{x}\verb|0]|\\\verb|[1|\phantom{x}\verb|0|\phantom{x}\verb|0]|\\\verb|[0|\phantom{x}\verb|1|\phantom{x}\verb|1]|\end{array}$};
  \node (100010001) at (156bp,22bp) [draw,draw=none] {$\begin{array}{l}\verb|[1|\phantom{x}\verb|0|\phantom{x}\verb|0]|\\\verb|[0|\phantom{x}\verb|1|\phantom{x}\verb|0]|\\\verb|[0|\phantom{x}\verb|0|\phantom{x}\verb|1]|\end{array}$};
  \node (111000000) at (226bp,122bp) [draw,draw=none] {$\begin{array}{l}\verb|[1|\phantom{x}\verb|1|\phantom{x}\verb|1]|\\\verb|[0|\phantom{x}\verb|0|\phantom{x}\verb|0]|\\\verb|[0|\phantom{x}\verb|0|\phantom{x}\verb|0]|\end{array}$};
  \node (000110001) at (50bp,122bp) [draw,draw=none] {$\begin{array}{l}\verb|[0|\phantom{x}\verb|0|\phantom{x}\verb|0]|\\\verb|[1|\phantom{x}\verb|1|\phantom{x}\verb|0]|\\\verb|[0|\phantom{x}\verb|0|\phantom{x}\verb|1]|\end{array}$};
  \node (000000111) at (228bp,322bp) [draw,draw=none] {$\begin{array}{l}\verb|[0|\phantom{x}\verb|0|\phantom{x}\verb|0]|\\\verb|[0|\phantom{x}\verb|0|\phantom{x}\verb|0]|\\\verb|[1|\phantom{x}\verb|1|\phantom{x}\verb|1]|\end{array}$};
  \draw [green,<-] (111000000) ..controls (274.92bp,124.11bp) and (278bp,123.24bp)  .. (278bp,122bp) .. controls (278bp,120.01bp) and (270.13bp,118.97bp)  .. (111000000);
  \definecolor{strokecol}{rgb}{0.0,0.0,0.0};
  \pgfsetstrokecolor{strokecol}
  \draw (287bp,122bp) node {$3$};
  \draw [red,<-] (111000000) ..controls (284.71bp,139.19bp) and (296bp,133.4bp)  .. (296bp,122bp) .. controls (296bp,107.97bp) and (278.89bp,102.44bp)  .. (111000000);
  \draw (305bp,122bp) node {$2$};
  \draw [blue,<-] (111000000) ..controls (293.25bp,144.84bp) and (314bp,138.26bp)  .. (314bp,122bp) .. controls (314bp,103.42bp) and (286.9bp,97.471bp)  .. (111000000);
  \draw (323bp,122bp) node {$1$};
  \draw [red,<-] (000110001) ..controls (98.925bp,128.34bp) and (102bp,125.72bp)  .. (102bp,122bp) .. controls (102bp,116.04bp) and (94.127bp,112.92bp)  .. (000110001);
  \draw (111bp,122bp) node {$2$};
  \draw [green,<-] (000111000) ..controls (82.925bp,324.11bp) and (86bp,323.24bp)  .. (86bp,322bp) .. controls (86bp,320.01bp) and (78.127bp,318.97bp)  .. (000111000);
  \draw (95bp,322bp) node {$3$};
  \draw [red,<-] (000111000) ..controls (92.706bp,339.19bp) and (104bp,333.4bp)  .. (104bp,322bp) .. controls (104bp,307.97bp) and (86.892bp,302.44bp)  .. (000111000);
  \draw (113bp,322bp) node {$2$};
  \draw [blue,<-] (000111000) ..controls (101.25bp,344.84bp) and (122bp,338.26bp)  .. (122bp,322bp) .. controls (122bp,303.42bp) and (94.897bp,297.47bp)  .. (000111000);
  \draw (131bp,322bp) node {$1$};
  \draw [red,<-] (000000111) ..controls (190.08bp,289.62bp) and (186.84bp,285.85bp)  .. (184bp,282bp) .. controls (175.41bp,270.34bp) and (168.49bp,255.62bp)  .. (000100011);
  \draw (193bp,272bp) node {$2$};
  \draw [blue,<-] (000000111) ..controls (213.54bp,281.1bp) and (208.34bp,270.66bp)  .. (202bp,262bp) .. controls (197.25bp,255.51bp) and (191.26bp,249.35bp)  .. (000100011);
  \draw (221bp,272bp) node {$1$};
  \draw [red,<-] (000110001) ..controls (97.168bp,77.502bp) and (117.64bp,58.193bp)  .. (100010001);
  \draw (121bp,72bp) node {$2$};
  \draw [blue,<-] (000100011) ..controls (156bp,148.97bp) and (156bp,79.056bp)  .. (100010001);
  \draw (165bp,122bp) node {$1$};
  \draw [green,<-] (000111000) ..controls (39.818bp,249.27bp) and (45.43bp,179.13bp)  .. (000110001);
  \draw (53bp,222bp) node {$3$};
  \draw [green,<-] (111000000) ..controls (194.12bp,76.461bp) and (181.06bp,57.8bp)  .. (100010001);
  \draw (206bp,72bp) node {$3$};
  \draw [green,<-] (000000111) ..controls (276.92bp,324.11bp) and (280bp,323.24bp)  .. (280bp,322bp) .. controls (280bp,320.01bp) and (272.13bp,318.97bp)  .. (000000111);
  \draw (289bp,322bp) node {$3$};
  \draw [red,<-] (000000111) ..controls (286.71bp,339.19bp) and (298bp,333.4bp)  .. (298bp,322bp) .. controls (298bp,307.97bp) and (280.89bp,302.44bp)  .. (000000111);
  \draw (307bp,322bp) node {$2$};
  \draw [blue,<-] (000000111) ..controls (295.25bp,344.84bp) and (316bp,338.26bp)  .. (316bp,322bp) .. controls (316bp,303.42bp) and (288.9bp,297.47bp)  .. (000000111);
  \draw (325bp,322bp) node {$1$};
  \draw [green,<-] (000111000) ..controls (87.65bp,278.02bp) and (111.61bp,258.39bp)  .. (000100011);
  \draw (114bp,272bp) node {$3$};
  \draw [blue,<-] (000100011) ..controls (108.83bp,177.5bp) and (88.364bp,158.19bp)  .. (000110001);
  \draw (121bp,172bp) node {$1$};
\end{tikzpicture}
\end{center}
\caption{Monoid $\Monoid$ in right order for the poset of Example~\ref{example.r trivial monoid}.
With the conventions in~\eqref{equation.LR order}, the identity is the biggest element in $\R$-order.
\label{figure.monoid}}
\end{figure}
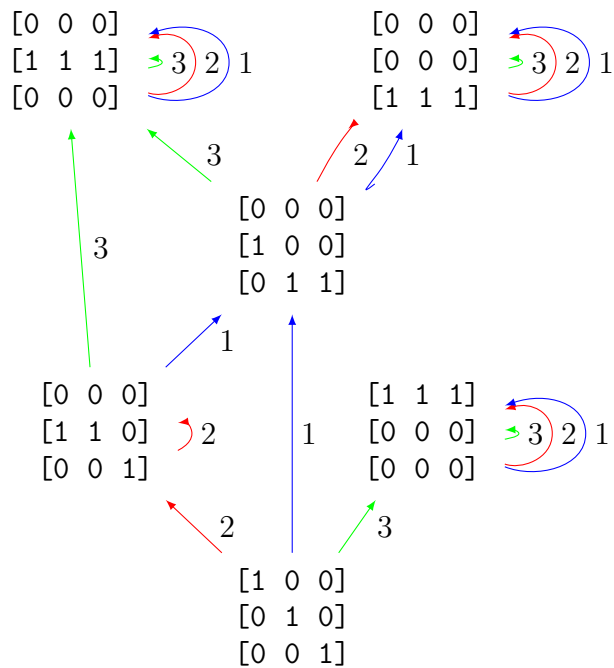
The proof of Theorem~\ref{theorem.eigenvalues} now follows from~\cite[Theorems 6.3 and 6.4]{steinberg.2006}
and some further considerations regarding the lattice $L$. For more details see~\cite[Section 6]{AKS:2012}.

%%%%%%%%%%%%%%%%%%%%%%%%%%%%%%%%%%%%%%%%%%%%%%%%%%%%%%%%%%
\section{Mixing Times} \label{section.mixing times}
%%%%%%%%%%%%%%%%%%%%%%%%%%%%%%%%%%%%%%%%%%%%%%%%%%%%%%%%%%

For random walks on hyperplane arrangements, Brown and Diaconis~\cite{brown_diaconis.1998}
(see also~\cite{athanasiadis_diaconis.2010}) give explicit bounds for the rates of convergence  to
stationarity. These bounds still hold for Markov chains related to left-regular bands~\cite{brown.2000}.
Here we present analogous results for the Markov chains corresponding
to the $\R$-trivial monoids of Section~\ref{section.chains}. The methods are very similar
to the ones we used for Markov chains related to nonabelian sandpile models~\cite{ASST:2013}, which also
turn out to yield $\R$-trivial monoids.

The {\bf rate of convergence} is the total variation distance from stationarity after $k$ steps, that is,
\[
	|| \mathbb{P}^k-w ||=\frac{1}{2}\sum_{\pi \in \L(P)}| \mathbb{P}^k(t)-w(\pi)|\;,
\] 
where $\mathbb{P}^k$ is the distribution after $k$ steps and $w$ is the stationary distribution.

\begin{thm}
  \label{theorem.rate of convergence}
  Let $P$ be a rooted forest with $n:=|P|$ and $p_x:=\min\{x_i \mid 1\le i \le n\}$. Then, as
  soon as $k\ge (n^2-1)/p_x$, the distance to stationarity of the promotion Markov chain satisfies
  \begin{displaymath}
    || \mathbb{P}^k-w || \leq \exp\left(- \frac{(kp_x-(n^2-1))^2}{2kp_x}\right)\,.
  \end{displaymath}
\end{thm}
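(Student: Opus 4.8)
The plan is to bound the distance to stationarity by the tail of a coalescence time, in the strong‑stationary‑time spirit of Brown and Diaconis~\cite{brown_diaconis.1998} and of our treatment of sandpile monoids~\cite{ASST:2013}, and then to estimate that tail by a Chernoff bound.

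\emph{Step 1 (reduction to a coalescence time).} By the way the edge weights are assigned in Section~\ref{subsection.promotion}, one step of the promotion chain is the random map $\pi\mapsto\pi\prom_i$, where $i\in[n]$ is drawn with probability $x_i$ independently of the current state. Fix an i.i.d.\ sequence $i_1,i_2,\dots$ with this law and put $Y_k=\prom_{i_1}\cdots\prom_{i_k}$, an operator on $\L(P)$. Couple the chain started from an arbitrary state with the chain started from the stationary distribution $w$ (normalized as in Remark~\ref{remark.constant}) by feeding both the same operators; the two coincide from step $k$ onward once $|\im(Y_k)|=1$. Since $|\im(Y_{k+1})|\le|\im(Y_k)|$ for all $k$, the coalescence time $T:=\min\{k:|\im(Y_k)|=1\}$ is well defined and a.s.\ finite (using strong connectivity, Proposition~\ref{proposition.strongly_connected}), and the coupling inequality gives $\|\mathbb{P}^k-w\|\le\mathbb{P}(T>k)$.

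\emph{Step 2 (a reset schedule of length $n^2-1$).} This is the combinatorial heart. By Lemma~\ref{lemma.action of del}, $\prom_i$ moves the letter $i$ to the last position and re‑sorts the chain $\{j\succeq i\}$ into its forced order. Using this, I construct a sequence $S_1,\dots,S_N$ of nonempty subsets of $[n]$ with $N\le n^2-1$, depending only on $P$, such that if $\tau_0=0$ and $\tau_j$ is the first time after $\tau_{j-1}$ with $i_{\tau_j}\in S_j$, then $|\im(Y_{\tau_N})|=1$, whence $T\le\tau_N$. The schedule is obtained by fixing the linear extension level by level from the bottom: one walks down the trees of $P$ and, at each node $v$ reached in a $P$‑compatible order, applies $\prom_v$, which by Lemma~\ref{lemma.action of del} forces the relative order of everything above $v$; one checks inductively that after the $t$‑th pass all elements of $\im(Y)$ agree on their lowest $t$ levels, so $n-1$ passes collapse the image to a point. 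A bookkeeping argument — each of the $n$ elements is touched at most once per pass, and a pass may be cut short once its target levels are frozen — bounds the total number of moves by $n^2-1$. Because $|\im(Y_k)|$ never increases, the (random) operators falling strictly between consecutive times $\tau_{j-1},\tau_j$ can only help, so $T\le\tau_N$ is not spoiled by the interspersed randomness. Each increment $\tau_j-\tau_{j-1}$ is geometric with success probability $\sum_{i\in S_j}x_i\ge p_x$, so $T$ is stochastically dominated by the waiting time $W$ for $n^2-1$ successes in i.i.d.\ $\mathrm{Bernoulli}(p_x)$ trials.

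\emph{Step 3 (concentration) and the main difficulty.} Then $\mathbb{P}(T>k)\le\mathbb{P}(W>k)=\mathbb{P}\bigl(\mathrm{Bin}(k,p_x)\le n^2-2\bigr)$, and the standard Chernoff lower‑tail bound $\mathbb{P}(\mathrm{Bin}(k,p)\le kp-a)\le\exp(-a^2/(2kp))$ applied with $a=kp_x-(n^2-1)\ge0$ — which is exactly the hypothesis $k\ge(n^2-1)/p_x$ — yields the claimed inequality $\|\mathbb{P}^k-w\|\le\exp\bigl(-(kp_x-(n^2-1))^2/(2kp_x)\bigr)$. The only nonroutine part is Step 2: proving that promotion on a rooted forest can be driven ``online'' to a determined state within $n^2-1$ well‑chosen moves. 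This relies essentially on the rooted‑forest hypothesis via Lemma~\ref{lemma.action of del}; for general posets $\prom_i$ is not a move‑to‑the‑end‑and‑sort operation and a different mechanism would be needed.
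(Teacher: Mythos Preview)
Your overall architecture --- couple, reduce to the coalescence time of $Y_k=\prom_{i_1}\cdots\prom_{i_k}$, bound the number of ``good'' steps needed by $n^2-1$, and finish with Chernoff --- matches the paper's, and Steps~1 and~3 are correct as written.  The genuine gap is in Step~2.

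You posit a \emph{fixed} schedule $S_1,\dots,S_N$, depending only on $P$, and claim that interspersed random moves cannot hurt because $|\im(Y_k)|$ is nonincreasing.  That monotonicity is not enough: the issue is not whether the image shrinks but whether the scheduled move at time $\tau_j$ still accomplishes its intended effect after arbitrary intervening operators have rearranged the image.  Nothing in your argument rules out that a random $\prom_c$ between $\tau_{j-1}$ and $\tau_j$ destroys the structure the earlier scheduled moves built, so that the next scheduled move no longer advances the collapse.  Moreover, your proposed invariant (``after the $t$-th pass all elements of $\im(Y)$ agree on their lowest $t$ levels'') concerns a \emph{prefix}, whereas by Lemma~\ref{lemma.action of del} each $\prom_v$ controls the \emph{suffix} --- it sends $v$ to position $n$ and re-sorts the chain above $v$ --- so the invariant is already inconsistent with the mechanism you invoke, and the ``bookkeeping argument'' for the bound $n^2-1$ is left entirely unspecified.

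The paper handles this by replacing the fixed schedule with an \emph{adaptive} progress measure that is monotone under \emph{all} moves.  One sets
\[
u(x)=\bigl(n-|\Rfactor(x)|,\ |\des(x)|\bigr)\in\mathbb{Z}_{\ge0}^2
\]
with the lexicographic order, where $\Rfactor(x)$ is the set of letters in the maximal common suffix of $\im(x)$ and $\des(x)=\{i:xG_i=x\}$.  From~\cite[Section~6]{AKS:2012} one has $u(xx')\le u(x)$ for every $x'$, so interleaved random moves are automatically harmless, and at every non-constant $x$ there exists a generator $G_i$ with $u(xG_i)<u(x)$; that generator is selected with probability at least $p_x$.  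Since $u$ descends from $(n,0)$ to $(0,n)$, at most $(n+1)(n-1)=n^2-1$ strict decreases are possible, after which the Chernoff step goes through exactly as you wrote.  The missing ingredient in your proposal is precisely this monotone statistic --- equivalently, a proof that $\Rfactor$ (not merely $|\im|$) is nondecreasing along the walk, together with an argument that it can always be strictly increased.
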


The {\bf mixing time}~\cite{levin_peres_wilmer.2009} is the number of steps $k$ until $|| \mathbb{P}^k-w || \le e^{-c}$
(where different authors use different conventions for the value of $c$). Using
Theorem~\ref{theorem.rate of convergence} we require
\[
(kp_x-(n^2-1))^2 \ge 2kp_xc\,,
\]
which shows that the mixing time is at
most $\frac{2(n^2+c-1)}{p_x}$.
If the probability distribution $\{x_i \mid 1\le i \le n\}$ is uniform, then $p_x$ is of order $1/n$ and the mixing time is
of order at most $n^3$.

The proof of Theorem~\ref{theorem.rate of convergence} follows the same outline as the proof in~\cite[Section 5.3]{ASST:2013}.
We need to define a statistic $u(x)$ for $x \in \Monoid$ such that $u(x)$ is minimal if and only if $x$ is the constant map
and furthermore
\begin{enumerate}
\item \label{i.decrease}
\textbf{$u$ decreases along $\R$-order:} $u(xx')\leq u(x)$ for any $x,x'\in \Monoid$. 
\item \label{i.strict decrease}
\textbf{Existence of generator with strict decrease:} There exists a generator $G_i$ such that 
  $u(xG_i)<u(x)$.
\end{enumerate}
Unlike in~\cite{ASST:2013}, we take $u(x) \in \mathbb{Z}_{\ge 0}^2$ with lexicographic ordering on $\mathbb{Z}_{\ge 0}^2$,
that is $(x,y)<(x',y')$ if either $x<x'$, or $x=x'$ and $y<y'$.
Set $u(x):= (n-|\Rfactor(x)|, |\des(x)|)$, where $\des(x) = \{ i \mid x G_i = x\}$. It is clear that $u(x)=(0,n)$ if and only if $x$ is 
a constant map, which is the minimal value $u$ can achieve. The maximal value of $u$ is achieved by the identity
$u(e)=(n,0)$.
The two conditions follow from~\cite[Section 6]{AKS:2012}: either the right factor $\rfactor(x)$ increases by right multiplication
by a generator $G_i$; if not, then $\{i\} \cup \Rfactor(x)$ must be an upper set again and $\des(xG_i) = \des(x) \setminus \{j \mid
\text{$j$ covers $i$ in $P$}\}$.

Therefore, the probability that $(n,0) \ge u(x)>(0,n)$ after $k$ steps of the right
random walk on $\Monoid$ is bounded above by the probability of having at most $(n+1)(n-1)=n^2-1$ successes in $k$ 
Bernoulli trials with success probability $p_x$. A successful step is one that decreases the statistic $u$.
Using Chernoff's inequality for the cumulative distribution function of a binomial random variable as in~\cite{ASST:2013}
we obtain Theorem~\ref{theorem.rate of convergence}.

%%%%%%%%%%%%%%%%%%%%%%%%%%%%%%%%%%%%%%%%%%%%%%%%%%%%%%%%%%
\section{Other Posets} \label{section.other posets}
%%%%%%%%%%%%%%%%%%%%%%%%%%%%%%%%%%%%%%%%%%%%%%%%%%%%%%%%%%

So far \cite{AKS:2012}, we have characterized posets, where the
Markov chains for the promotion graph yield certain simple formulas
for their eigenvalues and multiplicities. The eigenvalues have
explicit expressions for rooted forests and there is an explicit
combinatorial interpretation for the multiplicities as derangement
numbers of permutations for unions of chains by
Theorem~\ref{theorem.poset derangements}.

However, we have not classified all possible posets, whose
promotion graphs have nice properties. For example, the non-zero
eigenvalues of the transition matrix of the promotion graph of the
poset in Example~\ref{example.running example} are given by
\[
x_3+x_4, \quad x_3, \quad 0 \quad \text{and} \quad -x_1\;,
\]
even though the corresponding monoid is not $\R$-trivial (in fact, it
is not even aperiodic). The egg-box picture of the monoid is given in
Figure~\ref{figure.eggbox}. Notice that one of the eigenvalues is
negative.

\begin{figure}
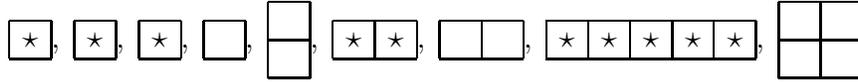

\[
\begin{array}{|c|}\hline \star \\ \hline \end{array},\;
\begin{array}{|c|}\hline \star \\ \hline \end{array},\;
\begin{array}{|c|}\hline \star \\ \hline \end{array},\;
\begin{array}{|c|}\hline  \phantom{\star} \\ \hline \end{array},\;
\begin{array}{|c|}\hline \phantom{\star} \\ \hline \phantom{\star} \\ \hline \end{array},\;
\begin{array}{|c|c|}\hline \star & \star \\ \hline \end{array},\;
\begin{array}{|c|c|}\hline \phantom{\star} & \phantom{\star} \\ \hline \end{array},\;
\begin{array}{|c|c|c|c|c|}\hline \star & \star & \star & \star & \star \\ \hline \end{array},\;
\begin{array}{|c|c|} \hline \phantom{\star} & \phantom{\star} \\ \hline \phantom{\star} & \phantom{\star} \\ \hline \end{array}
\]
\caption{Egg-box picture for the monoid associated to the promotion Markov chain for the poset in
Example~\ref{example.running example}.
\label{figure.eggbox}
}
\end{figure}

\begin{table}[h!]
\begin{tabular}{|c|c|}
\hline
Poset & Eigenvalues (other than 1) \\
\hline
{\tiny \begin{tikzpicture}
\draw (-1,1) -- (0,0) -- (1,1);
\draw (-1,1.25) node {$2$};
\draw (0,-.25) node {$1$};
\draw (1,1.25) node {$3$};
\draw (2,1.25) node {$4$};
\end{tikzpicture}}
& 
$0,\; 0,\; 0,\; x_2,\; x_3,\; x_2+x_3,\; x_4-x_1$ \\ \hline
{\tiny \begin{tikzpicture}
\draw (0,0) -- (0,.75);
\draw (-1,2) -- (0,1.25) -- (1,2);
\draw (0,-.25) node {$1$};
\draw (0,1) node {$2$};
\draw (-1,2.25) node {$3$};
\draw (1,2.25) node {$4$};
\end{tikzpicture}}
& $-x_1-x_2$ \\ \hline
{\tiny \begin{tikzpicture}
\draw (0,1) -- (0,0) -- (1,1) -- (1,0);
\draw (0,1.25) node {$4$};
\draw (0,-.25) node {$1$};
\draw (1,1.25) node {$3$};
\draw (1,-.25) node {$2$};
\end{tikzpicture}}
& $0,\;  x_3,\;  -x_1,\;  x_3+x_4$ \\ \hline
{\tiny \begin{tikzpicture}
\draw (-1,1.25) -- (0,2) -- (1,1.25);
\draw (-1,.75) -- (0,0) -- (1,.75);
\draw (0,-.25) node {$1$};
\draw (-1,1) node {$2$};
\draw (1,1) node {$3$};
\draw (0,2.25) node {$4$};
\end{tikzpicture}}
& $x_4-x_1$ \\ \hline
{\tiny \begin{tikzpicture}
\draw (0,0) -- (0,1) -- (1,0) -- (1,1) -- (0,0);
\draw (0,-.25) node {$1$};
\draw (1,-.25) node {$2$};
\draw (0,1.25) node {$3$};
\draw (1,1.25) node {$4$};
\end{tikzpicture}}
& $0,\;  x_3+x_4,\; -x_1-x_2$ \\
\hline
\end{tabular}
\caption{All inequivalent posets of size 4 whose promotion transition
  matrices have simple expressions for their eigenvalues.
\label{table.notdf4 nice}}
\end{table}

On the other hand, not all posets have this property. In particular, the
poset with covering relations $1<2,1<3$ and $1<4$ has six linear
extensions, but the characteristic polynomial of its transition matrix
does not factorize at all. It would be interesting to classify all
posets with the property that all the eigenvalues of the transition
matrices of the promotion Markov chain are linear in the probability
distribution $x_i$. In such cases, one would also like an explicit formula for
the multiplicity of these eigenvalues. 

We list all posets of size 4, which are not down forests and which
nonetheless have simple linear expressions for their eigenvalues in
Table~\ref{table.notdf4 nice} along with the eigenvalues. For all such
posets, there is at least one eigenvalue which contains a negative
term.  The posets, which are not down forests and the eigenvalues of
whose promotion transition matrices have nonlinear expressions, are
given in Table~\ref{table.notdf4 not nice}. Comparing the two tables,
it is not obvious how to characterize those posets where the
eigenvalues are simple. It would be interesting to classify posets
where all eigenvalues are linear in the parameters and understand the
eigenvalues and their multiplicities completely. 
For comparison, the egg-box picture of the second poset in Table~\ref{table.notdf4 not nice}
is presented in Figure~\ref{figure.eggbox bad}.

\begin{table}[h!]
\begin{tabular}{|c|c|}
\hline
{\tiny \begin{tikzpicture}
\draw (-1,1) -- (0,0) -- (1,1);
\draw (0,0) -- (0,1);
\draw (0,-.25) node {$1$};
\draw (-1,1.25) node {$2$};
\draw (0,1.25) node {$3$};
\draw (1,1.25) node {$4$};
\end{tikzpicture}}
&
{\tiny \begin{tikzpicture}
\draw (-1,.75) -- (0,0) -- (1,.75);
\draw (-1,1.25) -- (-1,2);
\draw (0,-.25) node {$1$};
\draw (-1,1) node {$2$};
\draw (1,1) node {$3$};
\draw (-1,2.25) node {$4$};
\end{tikzpicture}}
\\ \hline
\end{tabular}
\caption{All inequivalent posets of size 4 whose promotion transition
  matrices do not have simple expressions for their eigenvalues.
\label{table.notdf4 not nice}}
\end{table}

\begin{figure}
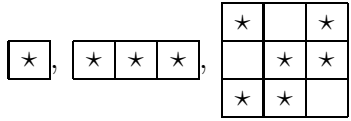

\[
\begin{array}{|c|}\hline \star \\ \hline \end{array},\;
\begin{array}{|c|c|c|}\hline \star & \star & \star \\ \hline \end{array},\;
\begin{array}{|c|c|c|} \hline \star & \phantom{\star} & \star \\ \hline \phantom{\star} & \star & \star \\ \hline  \star & \star & \phantom{\star} \\ \hline \end{array}
\]
\caption{Egg-box picture for the monoid associated to the promotion Markov chain for the second poset in
Table~\ref{table.notdf4 not nice}.
\label{figure.eggbox bad}
}
\end{figure}

Using data from all posets which are not down forests of sizes up to
7, we have the following necessary (but not sufficient) conjecture.

\begin{conj}\label{conjecture.not df}
Let $P$ be a poset of size $n$ which is not a down forest and $M$ be
its promotion transition matrix. If $M$ has eigenvalues which are
linear in the parameters $x_1, \dots, x_n$, then the following hold
\begin{enumerate}
\item the coefficients of the parameters in the eigenvalues are only
  one of $\pm 1$,
\item each element of $P$ has at most two successors,
\item the only parameters whose coefficients in the eigenvalues are -1
  are those which either have two successors or one of whose
  successors have two successors.
\end{enumerate}
\end{conj}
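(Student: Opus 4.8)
\textbf{A proposed line of attack for Conjecture~\ref{conjecture.not df}.}
The plan is to work throughout with the exact decomposition $M=\sum_{i=1}^n x_iG_i$, where $G_i$ is the $0$--$1$ matrix (introduced just before Theorem~\ref{theorem.r trivial}) recording the map $\pi\mapsto\pi\partial_{\pi^{-1}_i}$ on $\L(P)$; this is literally a decomposition because at the vertex $\pi'=\pi\partial_j$ the weight $x_{\pi_j}$ involves $j=\pi^{-1}_i$ for a \emph{distinct} index $i$ at each $j$, so each entry of $M$ is a $0$--$1$ combination of the $x_i$ and each $G_i$ is the matrix of a genuine function on the finite set $\L(P)$. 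Two kinds of information then feed the argument: global constraints, from $M$ being column-stochastic and the $G_i$ being functional matrices; and local constraints, from poset surgery relating the promotion monoid of $P$ to those of the subposets $P_j$ and of $P$ with a minimal element deleted (recall $\partial_j$ is built from $P_j$ and $\partial_n=\mathrm{id}$). Since the conjecture asserts only necessity, for (2) and (3) it is natural to argue by contraposition: exhibit, from a branching configuration violating (2) or (3), a factor of $\det(M-\lambda\mathbbm{1})$ that is nonlinear, respectively carries a coefficient outside $\{-1,0,1\}$ in the forbidden place.

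Part (1) should come cheaply. If $\lambda=\sum_i c_ix_i$ is an eigenvalue of $M=\sum_i x_iG_i$ identically in $\vec x$, then specializing $\vec x$ to the $i$-th standard basis vector shows $c_i$ is an eigenvalue of $G_i$, hence — $G_i$ being the matrix of a function on a finite set — $c_i$ is $0$ or a root of unity; applying a field automorphism, every conjugate of $c_i$ is again such an eigenvalue and so has absolute value $\le 1$, and Kronecker's theorem then forces $c_i\in\{-1,0,1\}$. (Specializing instead at a probability vector, where $M$ is honestly stochastic, gives $|\lambda|\le 1$ on the simplex, i.e.\ all $c_i\in[-1,1]$, a useful consistency check.) This also makes transparent why negative coefficients appear only when $\Monoid$ is not $\R$-trivial: a coefficient $-1$ is the value of $\lambda$ at a simplex vertex $e_i$, hence an eigenvalue $-1$ of $G_i$, which happens exactly when the functional graph of $\prom_i$ has a cycle of even length — a nontrivial cyclic group inside an $\mathcal H$-class of $\Monoid$, the feature one sees in the egg-box pictures of Figures~\ref{figure.eggbox} and~\ref{figure.eggbox bad}.

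The substance — and the main obstacle — is (2) and (3), because the $\R$-trivial machinery behind Theorem~\ref{theorem.eigenvalues} (Steinberg's theorems) is precisely what is no longer available. What is needed is a substitute: a block-triangular, Schur-type form for $M$ whose diagonal blocks are indexed by an upper set $S\subseteq[n]$ together with a ``local branching datum'' on $[n]\setminus S$, and which is stable enough under deleting minimal elements that non-linearity and the $\pm1$ pattern propagate by induction on $|P|$. Granting such a form, (2) would follow by showing that an element of $P$ with three or more successors forces one of these blocks to be an indecomposable matrix of size $>1$ whose characteristic polynomial is irreducible over $\mathbb{Q}(\vec x)$ — bootstrapping from the fact, noted in the text, that the star poset $1<2,\,1<3,\,1<4$ already has a non-factoring characteristic polynomial, and checking that adjoining further elements cannot re-split that block; and (3) would follow by reading off from the blocks that an eigenvalue acquires a $-1$ coefficient on $x_i$ only when the local datum is an element with two successors, or an element one of whose successors has two successors, since only these configurations embed the required even cycle. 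A plausible way to produce the filtration is to recycle the statistic $u(x)=(n-|\Rfactor(x)|,|\des(x)|)$ of Section~\ref{section.mixing times}: its first coordinate already stratifies the $\Monoid$-module $\mathbb{C}\L(P)$ by upper sets of $P$, and the associated graded pieces should be induced from representations of the small cyclic groups attached to multi-successor elements, which would yield (1)--(3) uniformly. Constructing this decomposition and, above all, controlling its behaviour under poset surgery is the crux; without it one is reduced to the finite case analysis that produced the evidence for $|P|\le 7$.
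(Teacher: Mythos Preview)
The statement you are attempting to prove is labelled a \emph{Conjecture} in the paper, and the paper offers no proof of it whatsoever: the authors derive it purely from computer evidence on posets of size at most~$7$. There is therefore no ``paper's own proof'' to compare against, and your proposal must be judged on its own merits as an attack on an open problem.

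Your argument for part~(1) has the right skeleton --- writing $M=\sum_i x_iG_i$ with $G_i$ the $0$--$1$ matrix of a self-map of $\L(P)$, and specializing $x=e_i$ to see that each coefficient $c_i$ of a linear eigenvalue must be an eigenvalue of $G_i$, hence $0$ or a root of unity --- but the appeal to Kronecker's theorem is confused. Kronecker's theorem says that an algebraic integer all of whose conjugates lie in the closed unit disc is $0$ or a root of unity; you already \emph{know} $c_i$ is $0$ or a root of unity from the functional-matrix argument, so Kronecker adds nothing and certainly does not force $c_i\in\{-1,0,1\}$. What you actually need is that $c_i$ is \emph{real} (or rational). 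This is implicit in the intended reading of the hypothesis ``linear in the parameters'' --- the authors clearly mean linear over $\mathbb{Q}$, else complex roots of unity would already falsify the conjecture --- but you should say so explicitly: a real root of unity is $\pm1$, and that is the whole argument. Your stochasticity remark ($|\lambda|\le 1$ on the simplex, hence $c_i\in[-1,1]$) is in fact closer to a self-contained justification than the Kronecker detour.

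For parts~(2) and~(3) you are candid that the proposal is a programme rather than a proof: the decisive object --- a block-triangular filtration of $\mathbb{C}\L(P)$ indexed by upper sets plus ``local branching data'', stable under deletion of minimal elements --- is not constructed, and you note yourself that without it one is back to finite checking. That honesty is appropriate, but it does mean nothing here goes beyond heuristics. In particular, the inductive step you sketch for~(2) (the star on four elements has a non-factoring characteristic polynomial, and ``adjoining further elements cannot re-split that block'') is exactly the hard part: there is no a~priori reason irreducibility of a factor survives under enlarging the poset, and producing a module-theoretic invariant that prevents such re-splitting is the whole content of the conjecture. Your suggestion to reuse the statistic $u(x)=(n-|\Rfactor(x)|,|\des(x)|)$ from Section~\ref{section.mixing times} to build the filtration is reasonable, but note that $u$ was designed to witness \emph{decrease} along the $\R$-order in the rooted-forest case, and its behaviour for non-$\R$-trivial monoids (where parts~(2)--(3) live) is not analysed anywhere in the paper.
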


%%%%%%%%%%%%%%%%%%%%%%%%%%%%%%%%%%%%%%%%%%%%%%%%%%%
\section{Subsets of $S_n$} \label{section.subsets}
%%%%%%%%%%%%%%%%%%%%%%%%%%%%%%%%%%%%%%%%%%%%%%%%%%%
We define a generalization of the action of promotion on an arbitrary
nonempty subset of $S_n$ inspired by the ideas in \cite{haiman.1992,
  malvenuto_reutenauer.1994,stanprom}.  Let $A$ be such a subset and
suppose $\pi=\pi_1 \cdots \pi_n \in A$ in one-line notation. Then
we define the operator $\sigma_i$ for $i \in \{1,\dots,n\}$ as
\begin{equation} \label{defsig}
	\pi \sigma_i = \begin{cases}
	\pi_1 \cdots \pi_{i-1} \pi_{i+1} \pi_i \cdots \pi_n & 
        \text{if $\pi_1 \cdots \pi_{i-1} \pi_{i+1} \pi_i \cdots \pi_n \in A$ }\\
	\pi & \text{otherwise.} 
\end{cases}
\end{equation}
In other words, $\sigma_i$ acts non-trivially on a permutation in $A$
if interchanging entries $\pi_i$ and $\pi_{i+1}$ yields another
permutation in $A$, and otherwise acts as the identity.  Then the
generalized promotion operator, also denoted $\partial_j$, is an
operator on $A$ defined by
\begin{equation}
\label{equation.promotion sigma}
 	\partial_j = \sigma_j \sigma_{j+1} \cdots \sigma_{n-1}.
\end{equation}

As in Sections~\ref{subsection.promotion uniform} and~\ref{subsection.promotion}, 
we can define a promotion graph whose
vertices are the elements of the set $A$ and where there is an edge
between permutations $\pi$ and $\pi'$ if and only if $\pi' = \pi
\partial_j$. In the uniform promotion case, such an edge has weight
$x_j$ and in the promotion case, the edge has weight $x_{\pi_j}$. In
both cases, we have analogous Markov chains. We describe the
stationary distribution of these chains below.

\begin{thm} \label{theorem.gen promotion}
Assuming the promotion graph for $A$ is strongly connected, the unique
stationary state weight $w(\pi)$ of the permutation $\pi\in A$ for the
corresponding discrete time Markov chain is
\begin{enumerate}
\item in the uniform promotion case
\be \label{subset.uniform}
  w(\pi) = \frac 1{|A|},
\ee
\item in the promotion case
  \be \label{formulaIII}
  w(\pi) = \prod_{i=1}^{n} \frac{x_{1}+ \cdots + x_{i}}
  {x_{\pi_1}+ \cdots + x_{\pi_i}}\;.
  \ee
\end{enumerate}
\end{thm}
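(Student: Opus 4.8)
The plan is to verify the master equation~\eqref{master.equation} directly in both cases. Since the promotion graph of $A$ is assumed strongly connected, any stationary vector we exhibit is automatically the unique one up to scaling, by Perron--Frobenius for irreducible finite Markov chains.

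For part~(1) the key point is that each elementary operator $\sigma_i$ is an \emph{involution} of $A$: if the word obtained from $\pi$ by interchanging $\pi_i$ and $\pi_{i+1}$ lies in $A$, then applying $\sigma_i$ to that word returns $\pi$, and otherwise $\sigma_i$ fixes $\pi$. Consequently each $\partial_j=\sigma_j\sigma_{j+1}\cdots\sigma_{n-1}$ is a bijection of $A$, so the matrix $\Pi_j$ recording the map $\pi\mapsto\pi\partial_j$ is a permutation matrix. In the uniform case $M=\sum_{j=1}^n x_j\Pi_j$ is a convex combination of doubly stochastic matrices, hence doubly stochastic, so $M\mathbbm{1}=\mathbbm{1}$; thus the uniform vector is stationary, and it is the unique stationary distribution.

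For part~(2), fix a target $\pi\in A$. Using the bijectivity from part~(1), for each $j\in[n]$ there is a unique $\pi^{(j)}\in A$ with $\pi^{(j)}\partial_j=\pi$, namely $\pi^{(j)}=\pi\partial_j^{-1}=\pi\,\sigma_{n-1}\sigma_{n-2}\cdots\sigma_j$ (so $\pi^{(j)}=\pi^{(j+1)}\sigma_j$ and $\pi^{(n)}=\pi$), and the edge $\pi^{(j)}\to\pi$ has weight $x_{(\pi^{(j)})_j}$. Because the columns of $M$ sum to $1$, the master equation for the vector $w$ of~\eqref{formulaIII} becomes the finite identity $\sum_{j=1}^n x_{(\pi^{(j)})_j}\,w(\pi^{(j)})=w(\pi)$. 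Since the numerator in~\eqref{formulaIII} does not depend on the permutation and $x_{\pi_1}+\cdots+x_{\pi_n}=1$, one may replace $w(\rho)$ throughout by $\prod_{i=1}^{n-1}(x_{\rho_1}+\cdots+x_{\rho_i})^{-1}$. I would then prove the identity by telescoping: going from $\pi^{(j+1)}$ to $\pi^{(j)}$ applies the single conditional transposition $\sigma_j$, which changes at most the entries in positions $j$ and $j+1$, so the ratio of consecutive values $w(\pi^{(j)})$ is an elementary ratio of partial sums $x_{\rho_1}+\cdots+x_{\rho_i}$; substituting these, one produces an auxiliary sequence $U_n,\dots,U_1$ with $U_1=0$ and $x_{(\pi^{(j)})_j}w(\pi^{(j)})=U_{j+1}-U_j$ for $j\le n-1$, so the sum collapses to $U_n$ together with the isolated term $x_{\pi_n}w(\pi)$ for $j=n$, and a short computation gives $U_n=(1-x_{\pi_n})w(\pi)$, hence the identity. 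This is exactly the telescoping computation behind Theorem~\ref{theorem.promotion} in~\cite{AKS:2012}: the set $\L(P)$ of linear extensions is itself a subset of $S_n$ of the present kind, with $\tau_i$ equal to $\sigma_i$ restricted to $\L(P)$ and $\partial_j$ the same product, and that argument invokes the poset only through the conditional-transposition definition of the $\tau_i$, so it carries over verbatim to an arbitrary $A$.

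The step I expect to require the most care is the bookkeeping in the telescoping when some $\sigma_j$ acts trivially: then $\pi^{(j)}=\pi^{(j+1)}$, yet both indices still contribute to the sum at the distinct positions $j$ and $j+1$, and one must check that the relation $x_{(\pi^{(j)})_j}w(\pi^{(j)})=U_{j+1}-U_j$ and the boundary values are unaffected; equivalently, the (conditional, leftward) bubbling description of $\pi^{(j)}=\pi\,\sigma_{n-1}\cdots\sigma_j$ must interact cleanly with the product formula. That $w$ is a well-defined function with $w(e)=1$ is immediate, since it is given explicitly in terms of $\pi$.
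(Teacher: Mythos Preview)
Your proposal is correct and follows exactly the route the paper takes: the paper simply states that the proofs are identical to those of Theorems~\ref{theorem.uniform promotion} and~\ref{theorem.promotion} in~\cite{AKS:2012}, and you have correctly identified that those arguments use only the conditional-transposition (involution) property of the $\tau_i$, which the $\sigma_i$ on $A$ share. Your observation that each $\partial_j$ is a bijection of $A$ (giving double stochasticity in the uniform case and a unique preimage $\pi^{(j)}$ for the telescoping in the promotion case) is precisely the mechanism, and your caution about the bookkeeping when $\sigma_j$ acts trivially is well placed but resolves just as in~\cite{AKS:2012}.
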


The proofs are essentially identical to the proofs of
Theorem~\ref{theorem.uniform promotion} and
Theorem~\ref{theorem.promotion} given in \cite{AKS:2012} and are
skipped.

\begin{rem} \label{remark.stationary subsets}
\mbox{}
\begin{enumerate}
  \item The entries of $w$ do not, in general, sum to one. Therefore
    this is not a true probability distribution, but this is easily
    remedied by a multiplicative constant $Z_{A}$ depending only on
    the subset $A$.

  \item Even if the set $A$ is such that the promotion graph is not
    strongly connected, \eqref{subset.uniform} and \eqref{formulaIII}
    hold. However, the formula need not be unique. The proofs of
    Theorem~\ref{theorem.gen promotion} still go through because all
    we need to do is to verify that the master equation
    \eqref{master.equation} holds.
\end{enumerate}
\end{rem}

There is a natural way to build subsets $A$ which cannot be the set of
linear extensions $\L(P)$ for any poset $P$, and whose promotion
graphs are yet strongly connected. The idea is to consider a union of
sorting networks.  A {\bf sorting network} from the identity
permutation $e$ to any permutation $\pi$ is a shortest path from one to
the other by a series of nearest-neighbor transpositions. In other words,
these are maximal chains in right weak order starting at the identity. For example,
one sorting network to the permutation $24153$ is
\[
1 2 3 4 5 \to 1 2 4 3 5 \to 2 1 4 3 5 \to 2 4 1 3 5 \to 2 4 1 5 3.
\]

\begin{prop}
\label{proposition.subset strongly_connected}
Let $A \subset S_n$ be a union of sorting networks.
Consider the digraph $G_A$ whose vertices are labeled by the elements of
$A$ and whose edges are given as follows: for $\pi, \pi'
\in A$, there is an edge between $\pi$ and $\pi'$ in $G_A$ if and only
if $\pi' = \pi \partial_{j}$ for some $j \in [n]$. Then $G_A$ is
strongly connected.
\end{prop}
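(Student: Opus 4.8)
The plan is to reduce strong connectivity of $G_A$ to connectivity of its underlying undirected graph, and then establish the latter by induction on the number of inversions.

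First, note that each operator $\sigma_i$ is an involution on $A$: if interchanging $\pi_i$ and $\pi_{i+1}$ yields a permutation $\pi'\in A$, then $\pi\sigma_i=\pi'$ and $\pi'\sigma_i=\pi$ (the second interchange is legal because $\pi\in A$); otherwise $\sigma_i$ fixes $\pi$. Hence each $\partial_j=\sigma_j\sigma_{j+1}\cdots\sigma_{n-1}$, being a composition of involutions of the finite set $A$, is a bijection of $A$; let $r_j\ge 1$ denote its order. Then whenever $G_A$ has an edge $\pi\to\pi\partial_j$, iterating $\partial_j$ produces a directed closed walk $\pi\to\pi\partial_j\to\pi\partial_j^{\,2}\to\cdots\to\pi\partial_j^{\,r_j}=\pi$ through that edge, so ``reachable from'' is a symmetric relation on the vertices. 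Consequently $G_A$ is strongly connected if and only if its underlying undirected graph is connected. Since every sorting network contains the identity $e$, we have $e\in A$, so it suffices to show that every $\pi\in A$ is joined to $e$ by an undirected path in $G_A$.

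Next, induct on $\ell(\pi)$, the number of inversions of $\pi$; the base case $\ell(\pi)=0$ is $\pi=e$. Suppose $\ell(\pi)\ge 1$. Since $A$ is a union of sorting networks, $\pi$ lies on some sorting network $\mathcal N$, and as $\pi\ne e$ it has an immediate predecessor $\pi^-$ on $\mathcal N$; thus $\pi^-\in A$, $\ell(\pi^-)=\ell(\pi)-1$, and $\pi^-$ is obtained from $\pi$ by interchanging the entries in two adjacent positions $i$ and $i+1$ (a covering step of right weak order). By the induction hypothesis $\pi^-$ is connected to $e$, so it remains only to connect $\pi$ to $\pi^-$. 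For this, use the identity $\pi^-\partial_i=\pi\partial_{i+1}$: since $\pi^-\sigma_i=\pi$ (legal, as $\pi\in A$), one has
\[
\pi^-\partial_i \;=\; \pi^-\sigma_i\sigma_{i+1}\cdots\sigma_{n-1} \;=\; \pi\,\sigma_{i+1}\cdots\sigma_{n-1} \;=\; \pi\partial_{i+1},
\]
where $\partial_n$ is read as the identity operator to cover the case $i=n-1$. Writing $\pi'$ for this common permutation, $G_A$ contains the edges $\pi\to\pi'$ and $\pi^-\to\pi'$ (or $\pi'=\pi$, in which case $\pi^-\to\pi$ is itself an edge), so $\pi$ and $\pi^-$ lie in the same connected component of the underlying undirected graph. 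This completes the induction, and hence the proof.

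The main obstacle is the last identity. Because $\partial_j$ is a product of several of the partial transpositions $\sigma_k$ rather than a single adjacent transposition, one cannot in general move from $\pi$ directly to its network-predecessor $\pi^-$ in one application of some $\partial_j$; instead, one routes both $\pi$ and $\pi^-$ to the common image $\pi'=\pi\partial_{i+1}=\pi^-\partial_i$. The preliminary reduction to undirected connectivity — which rests on every $\partial_j$ being a bijection of $A$ — is exactly what makes this one-directional step enough to conclude.
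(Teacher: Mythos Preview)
Your proof is correct and follows essentially the same approach as the paper: both first reduce to one-directional reachability via the invertibility of the $\partial_j$ (they are compositions of the involutions $\sigma_i$ on the finite set $A$), and both then exploit the sorting-network structure together with the basic relation $\partial_i=\sigma_i\partial_{i+1}$. The only packaging difference is that the paper recursively unwinds this relation to express each $\sigma_i$ as a word in the $\partial_\ell$'s (so that the sorting-network path $e\to\cdots\to\pi$ becomes a genuine directed path in $G_A$), whereas you use the single instance $\pi^-\partial_i=\pi\partial_{i+1}$ to route $\pi$ and its predecessor $\pi^-$ to a common vertex and then appeal to the undirected reduction; both arguments are equivalent in strength.
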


\begin{proof}
The operators $\partial_i$ are each invertible, which means that each
vertex of $G_A$ has exactly one edge pointing in and one pointing out
for each $i$. Therefore, it suffices to show that there is a directed
path from $e$ to $\pi$ for every $\pi$ in $A$.

By definition of a sorting network, $\pi$ can be written as
$e \sigma_{i_k} \dots \sigma_{i_1}$. Although the action of each
$\sigma_{i_j}$ depends crucially on the set $A$, they satisfy
$\sigma_{i_j}^2=1$. Using the fact that $\sigma_{n-1} =
\partial_{n-1}$ and \eqref{equation.promotion sigma}, one can
recursively express each $\sigma_{i_j}$ as a product of
$\partial_\ell$'s analogous to the proof of Lemma~2.3 in
\cite{AKS:2012}.
\end{proof}

As a consequence of Proposition~\ref{proposition.subset strongly_connected},
the unique stationary distribution of a subset
which is a union of sorting networks is given by
\eqref{formulaIII}. One is naturally led to ask whether the
eigenvalues of these transition matrices are also linear in the
parameters. This does not seem to be true in any general sense.

%%%%%%%%%%%%%%%%%%%%%%%%%%%%%%%%%%%%%%%%%%%%%%%%%%%%%%%%%%
\bibliographystyle{alpha} \newcommand{\etalchar}[1]{$^{#1}$}

%%%%%%%%%%%%%%%%%%%%%%%%%%%%%%%%%%%%%%%%%%%%%%%%%%%%%%%%%%

\end{document}